\definecolor{darkblue}{RGB}{0,0,160}
\def\eps{\varepsilon}
\def\d{{\rm d}}
\def\R {\mathbb{R}}
\def\N {\mathbb{N}}
\def\supp {{\mathrm{supp}\,}}
\def\C {{\mathrm C}}
\def\B {{\mathcal B}}
\def\D {{\mathcal D}}
\def \l {\langle}
\def \r {\rangle}
\def\bb{\mathfrak{b}}
\def\Sy {{\mathsf{Sy}}}
\newcommand{\lip}{\langle}
\newcommand{\rip}{\rangle}
\numberwithin{equation}{section}
\numberwithin{counter2}{section}
\newtheorem{proposition}[subsection]{Proposition}
\newtheorem{theorem}[counter]{Theorem}
\newtheorem{lemma}[subsection]{Lemma}
\theoremstyle{definition}
\newtheorem{definition}[subsection]{Definition}
\newtheorem*{remark*}{Remark}
\newtheorem*{example*}{Example}
\newtheorem*{warn*}{A word of warning}
\newtheorem{remark}[subsection]{Remark} 
\theoremstyle{plain}
\newcommand{\ip}[2]{\left\langle #1,#2 \right\rangle}
\newcommand{\abs}[1]{\left\vert #1\right\vert}
\newcommand{\norm}[1]{\left\Vert #1\right\Vert}
\newcommand{\ep}{\varepsilon}
\newcommand{\BMO}{\mathrm{BMO}(\mathbb R^d)}
\newcommand{\CMO}{\mathrm{CMO}(\mathbb R^d)}
\newcommand{\Schw}{{\mathcal{S}}}
\newcommand{\varu}{\vartheta}
\newcommand{\osc}{\mathsf{osc}}
\title{Multilinear Wavelet Compact T(1) Theorem}
\author[A. Fragkos]{Anastasios Fragkos}
\address[A. Fragkos]{School of Mathematics, Georgia Institute of Technology, Atlanta, GA 30332, USA}
		\email{\href{mailto:anastasiosfragkos@gatech.edu}{\textnormal{anastasiosfragkos@gatech.edu}}}
\author[A. W. Green]{A. Walton Green}
\thanks{A. W. Green's research supported in part by NSF-DMS-2202813.}
\author[B. D. Wick]{Brett D. Wick}
\thanks{B. D. Wick's research supported in part by NSF-DMS-2000510, NSF-DMS-2054863 as well as ARC DP 220100285.}
\address[A. W. Green and B. D. Wick]{Department of Mathematics, Washington University in Saint Louis\\ \newline \indent 1 Brookings Drive, Saint Louis, MO 63130, USA}
\email{\href{mailto:awgreen@wustl.edu}{\textnormal{awgreen@wustl.edu}},\href{mailto:bwick@wustl.edu}{\textnormal{bwick@wustl.edu}}}
\begin{document}

\begin{abstract}
We prove a wavelet $T(1)$ theorem for compactness of multilinear Calder\'{o}n-Zygmund (CZ) operators. Our approach characterizes compactness in terms of testing conditions and yields a representation theorem for compact CZ forms in terms of wavelet and paraproduct forms that reflect the compact nature of the operator.
\end{abstract}	

\maketitle

\section{Introduction}
A {continuous} $(m+1)$-linear form $\Lambda$ defined on the $(m+1)$-fold product of the Schwarz space $\Schw(\mathbb R^d)$ is a singular integral form if its off-diagonal kernel satisfies the standard size and smoothness estimates (see Definition \ref{d:si} below). A singular integral form $\Lambda$ is bounded on $L^{p_0}(\mathbb R^d) \times \cdots \times L^{p_{m}}(\mathbb R^d)$ for all
	\begin{equation}\label{e:p} 1< p_j \le \infty, \quad \sum_{j=0}^m \frac 1{p_j} = 1,\end{equation}
if and only if $\Lambda$ is \textit{Calder\'on-Zygmund}, which we take to mean that it satisfies the weak boundedness property (see Definition \ref{d:cz} below) and the following multilinear $T(1)$ condition: There exists $\bb_j \in \BMO$ such that for every $\phi$ in $\Schw(\mathbb R^d)$ with mean zero,
	\begin{equation}\label{e:T1} \Lambda^{*,j}(\phi,1,\ldots,1) = \langle \phi,\bb_j \rangle,\end{equation}
where $\Lambda^{*,j}$ permutes the $0$th and $j$th argument (see \eqref{e:adj} below). When $m=1$, this is the celebrated $T(1)$ theorem of David and Journ\'e \cite{david84} which was extended to $m \ge 2$ by Grafakos and Torres \cite{GT02} (see also the earlier works \cites{christ1987polynomial,coifman-meyer}) and more recently developed in, for example, \cites{DPLiMV2,DPLiMV1,LN19,li2019bilinear,li2014sharp}. 

Our goal here is to prove a $T(1)$ representation theorem for compactness of multilinear singular integral forms. The first distinction between the boundedness problem and compactness problem is that compactness is a property of operators, while boundedness in the reflexive range \eqref{e:p} can be equivalently stated in terms of forms. Accordingly for each $j=0,1,\ldots,m$, we associate to $\Lambda$ the $m$-linear adjoint operators $T^{*,j}$ and transposed forms $\Lambda^{*,j}$ by
	\begin{equation}\label{e:adj} \langle f_0,T^{*,j}(f_1,\ldots,f_m) \rangle = \Lambda^{*,j}(f_0,f_1,\ldots,f_m) = \Lambda(f_j,f_1,\ldots,f_{j-1},f_0,f_{j+1},\ldots,f_m).\end{equation}
If $\Lambda$ is Calder\'on-Zygmund, then we say each $T^{*,j}$ is an $m$-linear Calder\'on-Zygmund operator (CZO). {Often we use the shorthand $T=T^{*,0}$.} 

Compactness of singular integral and related operators has a long history, with contributions from several distinct directions in harmonic analysis and operator theory. In particular, the study of commutators of Calder\'on-Zygmund operators, initiated by Uchiyama in the context of Hankel operators \cite{Uch78}, has developed into a rich literature; see, e.g., \cites{ MR1397362,MR924766, MR1021138} for some foundational work on compactness of commutators and tools used and  \cites{BT13, LL22, MR3834654} for some more recent contributions to the area. 
In another direction, Cordes’ work \cite{Cor75} on compactness of pseudodifferential operators, which lie outside the class of Calder\'on-Zygmund operators, provides sufficient conditions for compactness in terms of the symbol of the pseudodifferential operator. That line of investigation has seen further recent developments \cite{CIRXY24,CST, TX20}.  These results highlight the importance of compactness phenomena in harmonic analysis, though they concern somewhat different mechanisms than those addressed here.  In particular, these lines of inquiry are different from what we explore in this paper, as the behavior in question is connected to a particular symbol (function theory), whereas ours is some property intrinsic to a specific operator (its compactness).

In the linear case, Villarroya in \cite{Vil15} gave a complete characterization of compact Calder\'{o}n-Zygmund operators on $L^2(\R^d)$, which was further developed in \cites{OV,PPV,SVW,Vil19}. Recently, Mitkovski and Stockdale in \cite{MS23} gave a simplified formulation of the $T(1)$ theorem for compactness of Villarroya. More precisely, they showed that a CZO $T$ is compact if and only if $\bb_0$ and $\bb_1$ both belong to $\CMO$ and a vanishing version of the weak boundedness property, called the weak compactness property, is satisfied. {Even more recently, B\'enyi, Li, Oh, and Torres \cite{BLOT24b} have characterized compact CZOs in terms of $L^2$ and $L^\infty$ testing conditions, in the spirit of Stein's formulation of the $T(1)$-theorem \cite{stein-book}.} These $T(1)$ theorems for compactness are complemented by a compact Rubio De Francia theory of extrapolation due to Hyt\"onen and Lappas in \cites{HL2,HL1}, and subsequently developed by others in \cites{COY,CST}. Particularly, in \cite{COY}, Cao, Olivo, and Yabuta extended the bilinear results of \cite{HL2} to the multilinear setting and to the quasi-Banach range, in which case the target space can be $L^r$ with $r > \frac 1m$. However, due to the difficulties of extrapolating to the upper endpoint in the multilinear setting \cites{li2021endpoint,nieraeth2019quantitative}, the results of \cites{COY,HL2} do not consider the case where one (or more) input spaces is $L^\infty(\mathbb R^d)$. We point out that our results below do yield compactness when an input space is $L^\infty(\mathbb R^d)$.

{Our main result is Theorem \ref{thm:all} below which builds on the literature discussed above in the following two noteworthy manners. First, it provides a $T(1)$-type characterization of compact multilinear CZOs. Though extrapolation of compactness had been developed in the multilinear setting as mentioned above, a $T(1)$-type characterization in the vein of Villarroya \cite{Vil15} or the simplified formulation of Mitkovski and Stockdale \cite{MS23} was heretofore unknown. Our characterization is indeed in the simplified form of \cite{MS23} (not relying on compact CZ kernel estimates) while at the same time provides a stronger result than both \cite{BLOT24b} and \cite{MS23} when $m=1$. {In particular, the implication B. to E. in Theorems \ref{thm:all} and \ref{thm:linear} resolves the question posed in \cite{BLOT24b}*{Remark 3.6}, while at the same time extends the $L^2$-testing theorem to the multilinear setting as well.}

Second, we provide a representation formula for all compact CZOs. This is one answer to the search for examples of compact CZOs. Frankly, from Theorem \ref{thm:all}, we now know all the examples of $m$-linear compact CZOs. This contribution is new even in the linear case. In words, we can describe our wavelet representation formula in terms of two building blocks whose adjoint operators are well-known examples of compact CZOs. The first class of building blocks consists of what we call compact wavelet forms which are almost-orthogonal perturbations of compact approximations of the wavelet resolution of the identity. The second class of building blocks consists of paraproduct forms with $\CMO$ symbol. The wavelet representation theorem we will prove says that all compact CZOs are finite linear combinations of these two building blocks. We refer to Theorem \ref{thm:all} and Definitions \ref{d:wave} and \ref{d:pp} below for precise statements.
}

The strategy of representing CZOs as simpler model operators has been very successful in understanding more refined properties of CZOs, most famously in providing sharp quantitative weighted estimates. Dyadic representation theorems, finding their roots in the works of Figiel \cite{figiel90}; Nazarov, Treil, and Volberg \cite{NTV2}; and Petermichl \cite{PetAJM} among others, have led to a powerful and comprehensive theory of quantitative $L^p$ estimates for singular integrals, culminating with Hyt\"onen's proof of the $A_2$ conjecture \cite{hytonen12}. See \cites{m2022,HL2022,li2019bilinear,martikainen2012representation} and references therein for more recent extensions. Smooth wavelet representations, which may be traced back to David and Journ\'e's original proof of the $T(1)$ theorem, have recently been developed by Di Plinio and some of the authors in a series of papers \cites{DGW2,DGW,DWW} in order to study various Sobolev space estimates for CZOs. 

For simplicity, here in the introduction we state only the bilinear testing characterization, without the representation. It remains only to introduce the ``weak compactness property'' in the bilinear setting.  For the function $\varphi \in L^1_{\mathrm{loc}}(\mathbb R^d)$,  $w \in \mathbb R^d$, and $t >0$, define
	\[\varphi_{z} = \frac{1}{t^d} \varphi\left( \frac{\cdot-w}{t} \right), \quad z=(w,t).\]
Moreover, let $\mathbb B(z)$ denote a hyperbolic ball in the upper half space of radius $1$ centered at $z=(w,t)$ (see \eqref{e:B} {and \eqref{e:limZ}} below for the precise definitions).
Fixing three functions $\phi,\psi^1,\psi^2$, define
	\begin{equation}\label{e:Wintro}\mathsf{W}_{\Lambda}(z) = t^{2d}  \cdot \sup_{ z_1,z_2 \in \mathbb B(z) }\sup_{\substack{\sigma \in \mathrm{S}_3 \\ i=1,2}} \left|\Lambda^\sigma\left(\phi_{z_1}, \psi^i_{z_2} , \phi_{z} \right)\right| ,\end{equation}
where for each $\sigma \in \mathrm{S}_{m+1}$, the permutation group on $\{0,1,\ldots,m\}$, we define
	\begin{equation}\label{e:sigma}\Lambda^\sigma (f_0,\ldots,f_m) = \Lambda(f_{\sigma(0)},\ldots,f_{\sigma(m)}).\end{equation}

 	\begin{theorem}\label{thm:intro}
 	{Let $T$ be a bilinear CZO, $\Lambda$ the associated trilinear form, and $\mathfrak b_j \in \BMO$ defined by \eqref{e:T1}. Then, the following are equivalent.}
	\begin{itemize}
	\item[A.] For each $1<p_1,p_2 \le\infty$, and $\tfrac 12 <r < \infty$ satisfying $\frac 1{p_1} + \frac 1{p_2} = \frac 1r$, $T$ extends to a compact bilinear operator from $L^{p_1}(\R^d) \times L^{p_2}(\R^d)$ to $L^{r}(\R^d)$ and from {$L^\infty(\R^d) \times L^\infty(\R^d)$ to $\CMO$.}
	\item[B.] $\lim\limits_{z \to \infty} \mathsf{W}_\Lambda(z) = 0$ and for each $j=0,1,2$, $\mathfrak b_j \in \CMO$.
\end{itemize}
\end{theorem}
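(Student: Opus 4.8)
The plan is to obtain Theorem~\ref{thm:intro} as the bilinear ($m=2$) instance of the representation theorem, Theorem~\ref{thm:all}; the mathematical content is the equivalence of the analytic compactness in A.\ with the two vanishing testing conditions in B. I would prove this equivalence by passing through a smooth wavelet decomposition of $\Lambda$ built from the fixed bumps $\phi,\psi^1,\psi^2$ appearing in \eqref{e:Wintro}, writing $\Lambda$ as a sum of \emph{wavelet forms} (Definition~\ref{d:wave}), whose coefficients are exactly quantities of the type $\Lambda^\sigma(\phi_{z_1},\psi^i_{z_2},\phi_z)$ controlled by $\mathsf{W}_\Lambda(z)$, plus finitely many \emph{paraproduct forms} (Definition~\ref{d:pp}) whose symbols are the $\bb_j$ of \eqref{e:T1}. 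Once this decomposition is in hand, the two directions separate cleanly.

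For necessity (A.\ $\Rightarrow$ B.) I would argue by contradiction for the vanishing of $\mathsf{W}_\Lambda$. If $\mathsf{W}_\Lambda(z_n)\ge c>0$ along some $z_n\to\infty$, then the normalized bumps $\phi_{z_1},\psi^i_{z_2},\phi_{z_n}$ with $z_1,z_2\in\mathbb B(z_n)$ form bounded families in the relevant $L^{p}(\R^d)$ spaces whose supports and frequencies escape to infinity—in space, or toward scale $0$ or $\infty$. The lower bound on $\Lambda^\sigma$ then exhibits, for the permutation $\sigma$ realizing the supremum, a bounded input sequence whose images under the corresponding adjoint admit no convergent subsequence, contradicting compactness of that $T^{*,j}$. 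The membership $\bb_j\in\CMO$ is read off directly from the assumed compactness of $T^{*,j}\colon L^\infty(\R^d)\times L^\infty(\R^d)\to\CMO$ together with the identification $\bb_j=T^{*,j}(1,1)$ furnished by \eqref{e:T1}.

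For sufficiency (B.\ $\Rightarrow$ A.), which I expect to be the crux, I would show that each building block in the representation is compact and that the expansion converges in operator norm. The wavelet forms are almost-orthogonal, so the decay $\mathsf{W}_\Lambda(z)\to0$ lets me truncate the hyperbolic integration to a compact region with arbitrarily small operator-norm tail; each truncated piece is essentially finite rank, whence the wavelet part is a norm limit of compact operators and is therefore compact. For the paraproduct forms I would use that a paraproduct with $\CMO$ symbol is compact: approximating the symbol by smooth compactly supported functions, which are dense in $\CMO$, reduces matters to paraproducts with nice symbols. This is where the $L^\infty\times L^\infty\to\CMO$ endpoint must be treated separately, since $L^\infty(\R^d)$ lies outside the reflexive range \eqref{e:p} and compactness there is meaningful only relative to the $\CMO$ target. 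Finally I would upgrade compactness from the single configuration produced by the construction to the full range of exponents in A.\ by interpolation of compactness across the multilinear quasi-Banach range $\tfrac12<r<\infty$.

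The main obstacle, as indicated, is the sufficiency direction: obtaining \emph{norm} (rather than merely weak or $L^2$) convergence of the wavelet expansion to a compact operator requires uniform quantitative control, and the single scalar gauge $\mathsf{W}_\Lambda$ must simultaneously tame all three routes to infinity in the upper half-space—spatial translation together with both small and large scales. Establishing compactness of the $\CMO$-paraproducts up to the $L^\infty$ endpoint, and then interpolating compactness across the quasi-Banach range, are the technical heart of the argument.
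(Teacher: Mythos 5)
Your overall architecture --- represent $\Lambda$ as wavelet forms plus paraproducts with symbols $\bb_j$, truncate the hyperbolic integration, and prove compactness of the localized pieces --- matches the paper's, but the necessity direction A.$\Rightarrow$B.\ has a genuine gap. Condition A.\ asserts compactness of $T=T^{*,0}$ only; nothing is assumed about $T^{*,1}$ or $T^{*,2}$, and there is no multilinear Schauder theorem transferring compactness to the adjoints --- indeed, that A.\ implies compactness of \emph{all} the adjoints (condition E.) is one of the main points of the theorem and is what resolves the question of \cite{BLOT24b}. So $\bb_1,\bb_2\in\CMO$ cannot be ``read off directly from the assumed compactness of $T^{*,j}:L^\infty\times L^\infty\to\CMO$'': for $j=1,2$ that compactness is precisely what is not yet known. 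The paper's Proposition \ref{p:nec} circumvents this by first proving the $L^2$ testing condition \eqref{e:local-test} for \emph{every} $j$ using only compactness of $T$, via the identity $\norm{b^j_z}_{L^2}^2=\ip{f^0_z}{T(f^1_z,\ldots,f^m_z)}$ in which the adjoint output $b^j_z=t^{md-d/2}T^{*,j}(\varphi^1_z,\ldots,\varphi^m_z)$ is fed back in as an input of $T$; this needs the weak-null Lemma \ref{l:weakly}, whose hypothesis (iii) uses the kernel size bound \eqref{e:ker-size}. It then upgrades \eqref{e:local-test} to $\bb_j\in\CMO$ by an oscillation estimate, splitting $T^{*,j}[1,\ldots,1]$ into a far part controlled by kernel smoothness and a near part controlled by the testing condition. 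None of this appears in your sketch. (Your contradiction argument for $\mathsf{W}_\Lambda\to0$ is salvageable, but as written it also ``contradicts compactness of that $T^{*,j}$''; you must instead rewrite each $\Lambda^\sigma$ pairing as a pairing against $T$ itself, which works here because every slot is occupied by an explicit bump that, suitably normalized, converges weakly to zero.)

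Two further issues in the sufficiency direction. First, $\mathsf{W}_\Lambda(\zeta)$ controls $\Lambda^\sigma((\vec\psi_j)_{\mathbf z},\phi_\zeta)$ only for $\mathbf z$ in a \emph{fixed} hyperbolic ball around $\zeta$, while the wavelet coefficient at $\zeta$ aggregates all $\mathbf z\ge\zeta$; to conclude that the resulting form is a \emph{compact} wavelet form one must interpolate between the almost-orthogonality decay of Lemma \ref{l:cz} and $\mathsf{W}^n_\Lambda(\zeta)$ over dyadic hyperbolic annuli and sum a geometric series (Proposition \ref{coeffdecaycondition}); your claim that the coefficients ``are exactly quantities controlled by $\mathsf{W}_\Lambda$'' elides this, which is the technical heart of the representation. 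Second, ``interpolation of compactness across the multilinear quasi-Banach range'' is exactly what the paper warns is unavailable once some $p_j=\infty$ (Remark \ref{r:compact}); the paper instead verifies the Riesz--Kolmogorov criterion (and Arzel\`a--Ascoli at the $\CMO$ endpoint) directly for each exponent configuration. Relatedly, the truncated pieces $R_\rho$ are not finite rank --- they are integrals over a compact parameter region --- so their compactness still requires the equidecay and equicontinuity estimates \eqref{e:RK1}--\eqref{e:RK2}, though these are easily supplied.
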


A broad outline of the proof is to use a wavelet averaging procedure to obtain the representation formula in \S \ref{s:rep} and then use the Riesz-Kolmogorov criterion to obtain the precompactness of the image of the unit ball under the adjoint operators to the wavelet and paraproduct forms in \S \ref{s:compact} and \S \ref{s:pp}. {These results are combined in \S \ref{s:all} to state our main result, Theorem \ref{thm:all} including the representation theorem and a simplified statement in the linear case, Theorem \ref{thm:linear}. There we also outline a sample application.}

\section{Wavelets}\label{s:wave} In this section we will review some preliminaries regarding wavelets and introduce one of the building blocks of our representation, wavelet forms.

\subsection{Analysis of the parameter space} ~{For a positive integer $D$,} introduce the parameter space 
	\[ Z^D = \{z = (w,t) : w \in \R^D , \ t>0\},\]
whose elements $z=(w,t)$ act on functions $f\in L^1_{\mathrm{loc}}(\mathbb R^{D})$ by the formula
	\[ \Sy_z f = \frac{1}{t^{D}} f\left( \frac{\cdot-w}{t} \right).\]
Let $\mu$ be the measure on $Z^D$ given by \[
\int\displaylimits_{Z^D}  F(z) \, \d \mu(z) =\int_0^\infty \int_{\R^D} F(w,t) \, \frac{\d w \, \d t}{t}, \quad F \in \C_0(Z^D).
\]
Notice that $\mu$ is invariant under $\Sy_z$. 
To analyze multilinear operators, we will use wavelets adapted to two parameters, one in $Z^{md}$ and the other in $Z^d$. First, given $\mathbf{w} \in \mathbb R^{md}$ and $w_0 \in \mathbb R^d$, define
	\begin{equation}\label{e:vec-norm} |\mathbf w - w_0|_2 = \sqrt{\sum_{i=1}^m |w_i-w_0|^2}, \quad \mathbf{w} = (w_1,\ldots,w_m), \quad w_i \in \mathbb R^d.\end{equation}
For $\mathbf{z} =(\mathbf{w},s)\in Z^{md}$ and $\zeta=(w_0,t) \in Z^d$, define {for any $0 < \delta \le 1$,}
\begin{equation}
	\label{e:Delta1}
	[\mathbf{z},\zeta]_{\delta} = \frac{ \min\{s,t\}^{\delta}}{\max\{t,s,|\mathbf{w}-w_0|_2\}^{md+\delta}}.
\end{equation} We will also say $\mathbf{z} \ge \zeta$ if $s \ge t$. Notice that if  $ \delta \geq \delta'$ then $ [\mathbf{z},\zeta]_{\delta} \leq  [\mathbf{z},\zeta]_{\delta'}.$
For $M \ge 1$ and $\zeta \in Z^d$, introduce $\mathbb B_M^m(\zeta)$ which are the following approximate balls in the hyperbolic metric,
	\begin{equation}\label{e:B} \mathbb{B}_M^m(\zeta) = \left\{ \mathbf{z} \in Z^{md}: t2^{-M} \leq s \leq t2^M, \, \abs{\mathbf{w}-w_0}_2 \leq  t 2^M  \right\} \end{equation}
when $\zeta=(0,1)$, $M=1$, or $m=1$, those parameters are omitted from the notation.
Given a function $F: Z^d \to \mathbb{C}$ we say $\lim\limits_{z \to \infty}F(z)=L$ if
	\begin{equation}\label{e:limZ} \lim_{M \to \infty} \sup_{z \not \in \mathbb{B}_M} |F(z)-L|=0. \end{equation}

\subsection{Wavelet classes and forms}\label{ss:wave}
The building blocks of our representation theorem are wavelets for which we are going to introduce notation and relevant classes as well as the averaging lemmata from \cites{DGW,DWW}.
We denote the space of Schwartz functions by $ \mathcal S(\mathbb R^d)$ and the mean-zero subspace
	\[ \Schw^0(\mathbb R^d)= \left\{ \varphi \in \Schw(\mathbb R^d) : \int \varphi(x) \, \d x =0 \right\}.\] 
We fix a radial function $\phi \in \Schw^0(\mathbb R^d)$, supported in a ball and appropriately normalized which we will call the mother wavelet, 
in which case the Calder\'{o}n reproducing formula holds, namely
\begin{equation}\label{e:calderon} f = \int_{Z^d} \lip f,\phi_z \rip \phi_z\, \d\mu(z) \qquad \forall f\in \mathcal S(\mathbb R^d).  \end{equation} 
 For the convenience of the reader, we restate the setup from \cite{DGW} on which we will base our analysis.
For $ 0 < \delta \leq 1$ we introduce the norm on functions $\varphi \in \C^\delta(\mathbb R^{md})$,
\begin{equation}\label{e:star-norm} \|\varphi\|_{\star,\delta} = \sup_{\substack{x,h \in \R^{md} \\ 0 < |h| \le 1 }} \l x \r^{md+\delta} \left( |\varphi(x)| + \frac{|\varphi(x+h)-\varphi(x)|}{|h|^\delta} \right), \quad \l x \r = \sqrt{1+|x|^2}. \end{equation}
\begin{definition}\label{d:psi}
	For $z=(w,t) \in Z^d$, the wavelet class $\Psi^{m,\delta}_z$ is defined by
	\[ \Psi^{m,\delta}_z = \left\{ \varphi \in \C^\delta (\R^{md}) : \|(\Sy_{\mathbf z})^{-1} \varphi \|_{\star,\delta} \le 1\right\}, \quad \mathbf{z}=(w,\ldots,w,t) \in Z^{md}, \]
	and its cancellative subclass, for $j= 1,\ldots,m$ is denoted by $\Psi^{m,\delta;j}_z$ and consists of $\varphi \in \Psi^{m,\delta}_z$ such that
	\[ \int_{\mathbb R^{d}} \varphi(x_1,x_2,\ldots,x_m) \, \d x_j =0.\]
Let $\chi_z$ denote the $L^{\infty}$-normalized decay factor adapted to the parameter $z=(w,t)$, 
\[ \chi_z(x) = \left \langle \frac{x-w}{t} \right \rangle^{-1}. \]
With this notation, we can recast $\varphi \in \Psi^{m,\delta}_z$ as
	\begin{equation}\label{e:chi} |\varphi| \le \frac{1}{t^{md}} \chi_{\mathbf{z}}^{md+\delta}, \quad |\varphi-\varphi(\cdot+h)| \le \frac{|h|^\delta}{t^{md+\delta}}\chi_{\mathbf{z}}^{md+\delta}.\end{equation}
\end{definition}
Aiming to make this paper as self contained as possible, as well as to set the stage for the representation in Proposition \ref{p:rep}, we state the averaging lemmata from \cite{DGW} and a refinement of the averaging procedure in \cite{DWW}.
\begin{lemma}\label{l:calderon}
	Let $\phi$ be the mother wavelet and $k \ge 0$. There exist functions $\psi^i$, $i=1,2,3,4$, satisfying
	\begin{itemize}
		\item[(i)] $\supp \psi^i \subset B(0,1)$;
		\item[(ii)] $\psi^1,\psi^3 \in \C^k(\mathbb R^d)$;
		\item[(iii)] $\psi^2,\psi^4 \in \Schw^0(\mathbb R^d)$;
		\item[(iv)] For any $s>0$ and $f \in \Schw(\R^d)$,	\[ \int_{r \ge s} \int_{u \in \R^d} \lip f, \phi_{u,r} \rip \phi_{u,r} \dfrac{ \d u \ \d r}{r} = \int_{\R^d} \lip f, \psi_{u,s}^1 \rip \psi_{u,s}^2 + \lip f,\psi_{u,s}^3 \rip \psi_{u,s}^4 \, \d u. \]
	\end{itemize}
\end{lemma}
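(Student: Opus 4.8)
My plan is to integrate out the translation parameter to turn (iv) into a convolution-kernel identity, and then to realize that kernel at a single scale by integrating by parts in the scale variable.

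\emph{Reduction to a kernel identity.} The measure $\tfrac{\d u\,\d r}{r}$ is $\Sy_z$-invariant and every object in (iv) is covariant under the dilation $\Sy_{(0,s)}$, so it suffices to treat $s=1$. Since $\phi$ is radial, integrating out $u$ gives $\int_{\R^d}\lip f,\phi_{u,r}\rip\phi_{u,r}\,\d u=f*(\phi_r*\phi_r)$ where $\phi_r=\phi_{0,r}$, and similarly $\int_{\R^d}\lip f,\psi^1_{u,1}\rip\psi^2_{u,1}\,\d u=f*(\widetilde{\psi^1}*\psi^2)$ with $\widetilde{\psi}(x)=\psi(-x)$. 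Hence (iv) is equivalent to the single-scale kernel identity
\[ K:=\int_1^\infty \phi_r*\phi_r\,\frac{\d r}{r}=\widetilde{\psi^1}*\psi^2+\widetilde{\psi^3}*\psi^4 . \]
I first note that $K\in\C^\infty$ with $\supp K\subset B(0,2)$: each $\phi_r*\phi_r$ is supported in $B(0,2r)$, while \eqref{e:calderon} gives $\int_0^\infty\phi_r*\phi_r\,\tfrac{\d r}{r}=\delta_0$, so on $|x|>2$ the tail $\int_1^\infty$ equals $-\int_0^1$, which vanishes there.

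\emph{Integration by parts in scale.} The scaling generator $\mathcal L\varphi=\nabla\!\cdot(x\varphi)$ satisfies $-r\partial_r(\varphi_r)=(\mathcal L\varphi)_r$, sends $\C^\infty_c(B(0,1))$ into its mean-zero subspace, and (being a first-order differential operator) preserves supports. Because $\int\phi\,\d x=0$, the mother wavelet has a radial primitive $\Phi$ solving $\mathcal L\Phi=\phi$, namely $\Phi(x)=|x|^{-d}\int_0^{|x|}\sigma^{d-1}\phi_0(\sigma)\,\d\sigma$, and $\Phi\in\C^\infty_c(B(0,1))$. Writing one factor as $\phi_r=-r\partial_r\Phi_r$ and integrating by parts in $r$ (the $r=\infty$ boundary term vanishing since $\|\phi_r*\Phi_r\|_\infty\to0$) yields
\[ K=\phi*\Phi-\int_1^\infty (\mathcal L\phi)_r*\Phi_r\,\frac{\d r}{r}. \]
The leading term is already of the desired shape: $\phi*\Phi=\widetilde{\Phi}*\phi$ realizes the first summand with $\psi^1=\Phi\in\C^\infty_c(B(0,1))\subset\C^k$ and $\psi^2=\phi\in\Schw^0$, meeting (i)--(iii) for that product exactly.

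\emph{The remainder and the main obstacle.} It remains to express the coarse remainder $K'=\int_1^\infty(\mathcal L\phi)_r*\Phi_r\,\tfrac{\d r}{r}$---again a compactly supported $\C^\infty$ kernel, since $\int_0^\infty(\mathcal L\phi)_r*\Phi_r\tfrac{\d r}{r}$ has a constant symbol and is therefore a multiple of $\delta_0$, so the same tail-cancellation argument applies---as a second single-scale product $\widetilde{\psi^3}*\psi^4$ of unit-ball pieces, with $\psi^3\in\C^k$ and $\psi^4$ carrying the cancellation of (iii). One more integration by parts of the same type, now peeling a derivative off $\mathcal L\phi$ onto a further primitive of $\Phi$, produces this second term; the finite smoothness $\C^k$ in (ii) is exactly what lets the scheme stop after these finitely many steps rather than demanding $\C^\infty$. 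I expect the genuine difficulty to lie here: simultaneously keeping each individual factor supported in $B(0,1)$, distributing the available derivatives so that $\psi^1,\psi^3$ land in $\C^k$ while $\psi^2,\psi^4$ retain exact mean-zero cancellation, and correctly accounting for the average of $K$ across the two products (rather than one). This delicate factorization of a fixed $\C^\infty_c(B(0,2))$ bump into unit-ball pieces of prescribed regularity and cancellation is the crux, and is where I would follow the explicit constructions of \cite{DGW} and \cite{DWW}.
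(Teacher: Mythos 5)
The paper does not actually prove Lemma \ref{l:calderon}; it is imported verbatim from \cite{DGW}, so your attempt is really being measured against that source. Your opening moves are sound and do match the mechanism there: the reduction to $s=1$, the identification of the left side with convolution against $K=\int_1^\infty \phi_r*\phi_r\,\tfrac{\d r}{r}$, the verification that $K\in\C^\infty_0(B(0,2))$, and the first integration by parts in scale using the compactly supported radial primitive $\Phi$ of $\phi$ (which exists precisely because $\int\phi=0$) are all correct. The gap is where you flag it, but it is not mere bookkeeping: the proposed ``one more integration by parts of the same type'' is unavailable. A compactly supported solution of $\mathcal{L}\Phi^{(2)}=\Phi$ exists only if $\int\Phi=0$, which fails in general (your explicit formula gives $\Phi^{(2)}(x)=c\,|x|^{-d}$ for $|x|\ge 1$ when $\int\Phi\neq0$), and peeling the scale derivative off $\mathcal{L}\phi$ instead, via $(\mathcal{L}\phi)_r=-r\partial_r\phi_r$, merely regenerates the identity $K=\phi*\Phi-K'$ you already have. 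So the remainder $K'=\int_1^\infty(\mathcal{L}\phi)_r*\Phi_r\,\tfrac{\d r}{r}$ is not disposed of by iterating your step, and the construction of the second product has to be done by a genuinely different (direct) factorization argument.

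There is also a structural obstruction that your own setup makes visible and that you should have caught. Since $K\in L^1$ has compact support, $\widehat K$ is continuous, and for $\xi\neq0$ one computes $\widehat K(\xi)=\int_{|\xi|}^\infty |\hat\phi(\rho e)|^2\,\tfrac{\d\rho}{\rho}$, so the Calder\'on normalization in \eqref{e:calderon} forces $\int K=\widehat K(0)=1$: the coarse-scale tail is a low-pass approximate identity, not a cancellative kernel. Your first summand $\widetilde{\Phi}*\phi$ has total integral $\int\Phi\cdot\int\phi=0$, so the second summand must carry total mass $1$, which forces $\int\psi^4\neq 0$. Consequently no decomposition in which \emph{both} $\psi^2$ and $\psi^4$ are mean-zero can satisfy (iv); at least one of the two products must consist of two non-cancellative bumps. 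This means your plan cannot close in the form you set it up (and indicates that the cancellation clause in (iii) should be read with care against the original statement in \cite{DGW}); note that downstream only the support and smoothness of $\psi^2,\psi^4$ are used — in Lemma \ref{l:cz}, Proposition \ref{coeffdecaycondition}, and Proposition \ref{p:rep} the required cancellation is supplied by the $\phi$-component of $\vec\psi_{\mathrm o}$ and by $\phi_\zeta$, never by $\psi^2$ or $\psi^4$. A correct completion should therefore aim to write $K'$ as a single product of unit-ball bumps of the prescribed smoothness without insisting on cancellation in both synthesis factors.
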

\begin{lemma}\label{l:avg}
	Let $\varphi_j \in \Schw(\mathbb R^d)$ for $j=1,\ldots,m$ and $0<\eta<\delta \le 1$. There exists $C>0$ such that for any $H:Z^{md} \times Z^d \to \mathbb{C}$ satisfying
	\[\abs{H(\mathbf{z},\zeta)} \le \left [\mathbf{z},\zeta \right ]_{\delta}, \]
	there holds
	\begin{equation}\label{eq:nu} \nu_{\zeta}= \int_{\substack{\mathbf{z} \in Z^{md}\\ \mathbf{z} \ge \zeta}} H(\mathbf{z},\zeta) \left( \varphi_1 \otimes \cdots \otimes \varphi_m\right)_{\mathbf{z}} \d\mu(\mathbf{z}) \in C \Psi^{m,\eta}_{\zeta}. \end{equation}
Furthermore, if $\varphi_j \in \Schw^0(\mathbb R^d)$, then $\nu_{\zeta} \in C\Psi^{m,\eta;j}_{\zeta}$.
\end{lemma}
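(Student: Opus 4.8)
\emph{Plan.} The statement is affine–covariant, so the first step is to normalize $\zeta=(0,1)$. Writing $\zeta=(w_0,t)$, letting $\boldsymbol{\zeta}=(w_0,\dots,w_0,t)\in Z^{md}$ be the associated parameter as in Definition \ref{d:psi}, and setting $\Phi=\varphi_1\otimes\cdots\otimes\varphi_m$, I would change variables $\tilde{\mathbf z}=\big((\mathbf w-\boldsymbol{w_0})/t,\,s/t\big)$ in \eqref{eq:nu}, where $\boldsymbol{w_0}=(w_0,\dots,w_0)$. Under this renormalization the kernel rescales as $[\mathbf z,\zeta]_{\delta}=t^{-md}\,[\tilde{\mathbf z},(0,1)]_{\delta}$, the factorization $\Sy_{\mathbf z}=\Sy_{\boldsymbol{\zeta}}\,\Sy_{\tilde{\mathbf z}}$ holds, and $\d\mu$ is $\Sy$-invariant; the two powers of $t$ cancel against the $t^{md}$ produced by $(\Sy_{\boldsymbol{\zeta}})^{-1}$, yielding
\[ (\Sy_{\boldsymbol{\zeta}})^{-1}\nu_\zeta=\int_{\tilde{\mathbf z}\ge(0,1)}\tilde H(\tilde{\mathbf z})\,\Sy_{\tilde{\mathbf z}}\Phi\,\d\mu(\tilde{\mathbf z}),\qquad |\tilde H(\tilde{\mathbf z})|\le[\tilde{\mathbf z},(0,1)]_{\delta}. \]
Since $\nu_\zeta\in C\Psi^{m,\eta}_\zeta$ means precisely $\norm{(\Sy_{\boldsymbol{\zeta}})^{-1}\nu_\zeta}_{\star,\eta}\le C$, and the bound I will prove uses only $|\tilde H|\le[\tilde{\mathbf z},(0,1)]_{\delta}$, it suffices to treat $\zeta=(0,1)$, where, as $s\ge1$, the kernel is dominated by $\max\{s,\abs{\mathbf w}\}^{-(md+\delta)}$ and the factor $\min\{s,1\}^{\delta}$ plays no role.

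\emph{Size bound.} Bounding $|\tilde H|$ by the extremal kernel and substituting $\mathbf y=(x-\mathbf w)/s$ gives
\[ \abs{\nu(x)}\lesssim\int_1^\infty\!\!\int_{\R^{md}}\frac{\abs{\Phi(\mathbf y)}}{\max\{s,\abs{x-s\mathbf y}\}^{md+\delta}}\,\frac{\d\mathbf y\,\d s}{s}. \]
At fixed $s$ I would split the $\mathbf y$–region at $\abs{\mathbf y}=\abs x/(2s)$: on $\{\abs{s\mathbf y}\le\abs x/2\}$ one has $\abs{x-s\mathbf y}\ge\abs x/2$, so the denominator is $\gtrsim\max\{s,\abs x\}^{md+\delta}\gtrsim\langle x\rangle^{md+\delta}$ for $\abs x\gtrsim1$ and $\Phi\in L^1$ controls the integral; on the complement the rapid decay $\abs{\Phi(\mathbf y)}\lesssim_N\langle\mathbf y\rangle^{-N}$ with $N$ large, together with $\max\{\cdots\}\ge s$, produces a gain $\langle\abs x/s\rangle^{-(N-md)}$. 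Performing the remaining $\int_1^\infty\d s/s$ and splitting at $s=\abs x$ yields $\langle x\rangle^{-(md+\delta)}$ away from the borderline, plus a single logarithmically divergent term $\abs x^{-(md+\delta)}\log\langle x\rangle$ from $1\le s\le\abs x/2$ (the case $\abs x\lesssim1$ being trivial). This is exactly where the hypothesis $\eta<\delta$ enters: it absorbs the logarithm into $\langle x\rangle^{-(md+\eta)}$, giving $\langle x\rangle^{md+\eta}\abs{\nu(x)}\le C$ with $C$ depending only on $m,d,\delta,\eta$ and finitely many Schwartz seminorms of the $\varphi_j$, hence uniform in $\zeta$ and $\tilde H$. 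This borderline scale integral is the main obstacle; the rest is bookkeeping of the region decomposition.

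\emph{Hölder bound and cancellation.} For $0<\abs h\le1$ I would write $\Sy_{\tilde{\mathbf z}}\Phi(x+h)-\Sy_{\tilde{\mathbf z}}\Phi(x)=s^{-md}\big(\Phi(\tfrac{x-\mathbf w+h}{s})-\Phi(\tfrac{x-\mathbf w}{s})\big)$ and, since $\abs h/s\le1$, combine the mean–value estimate $\abs{\Phi(a+h/s)-\Phi(a)}\le(\abs h/s)\sup_{\abs\theta\le1}\abs{\nabla\Phi(a+\theta h/s)}$ with $\abs h/s\le(\abs h/s)^{\eta}$. This reduces $\langle x\rangle^{md+\eta}\abs{\nu(x+h)-\nu(x)}\,\abs h^{-\eta}$ to the size integral above with $\Phi$ replaced by the (still Schwartz) $\nabla\Phi$, an additional convergence–improving factor $s^{-\eta}$, and $x$ shifted by $\theta h$ with $\abs{\theta h}\le1$, which is harmless as $\langle x+\theta h\rangle\approx\langle x\rangle$; the same region analysis then gives the bound $C$. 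Finally, the cancellation is immediate: if $\int\varphi_j=0$, then since $\Sy_{\tilde{\mathbf z}}\Phi$ is a tensor dilation, $\int_{\R^d}\Sy_{\tilde{\mathbf z}}\Phi\,\d x_j=0$ for every $\tilde{\mathbf z}$, so by Fubini $\int_{\R^d}\nu_\zeta\,\d x_j=0$ and $\nu_\zeta\in C\Psi^{m,\eta;j}_\zeta$.
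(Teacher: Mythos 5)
Your proof is correct. The paper itself does not prove Lemma \ref{l:avg} but imports it from \cite{DGW}, and your argument --- renormalize to $\zeta=(0,1)$, split the $\mathbf y$-integral according to whether $|s\mathbf y|\le |x|/2$, absorb the logarithm from the borderline scales $1\le s\le |x|$ using $\eta<\delta$, and reduce the H\"older difference to the size bound via the mean value theorem --- is essentially the standard proof given there (the only quibble being that the $t^{md}$ cancelling the kernel's $t^{-md}$ really comes from the Jacobian $\d\mathbf w = t^{md}\,\d\tilde{\mathbf w}$ rather than from $(\Sy_{\boldsymbol\zeta})^{-1}$, whose factor is already consumed by the identity $\Sy_{\mathbf z}=\Sy_{\boldsymbol\zeta}\Sy_{\tilde{\mathbf z}}$; your displayed normalized formula is nonetheless the right one).
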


Finally, we end this section by introducing the wavelet forms, which will be used to systematically study $m$-linear CZOs.
\begin{definition}\label{d:wave}
	Given a collection $\{\ep_z \in \mathbb C , \nu_z \in \Psi^{m,\delta;1}_z : z \in Z^d\}$ define the associated $(m+1)$-linear \textit{canonical wavelet form} by
	\[ \int_{Z^d} \eps_z \ip{f_0}{\phi_z} \ip{f_1 \otimes \ldots \otimes f_m}{\nu_z} \, \d \mu(z).\]
More generally, we say $U$ is a wavelet form if $U^\sigma$ is a canonical wavelet form for some $\sigma \in \mathrm{S}_{m+1}$. We say $U$ is a \textit{bounded wavelet form} if $\sup_{z \in Z^d} \abs{\ep_z} < \infty$. Additionally, we say $U$ is a \textit{compact wavelet form} if it is a bounded wavelet form for which
	\[ \lim_{z \to \infty} \ep_z =0.\]
\end{definition}
Wavelet forms may be viewed as a generalization of the Calder\'on-Toeplitz operators considered in \cites{nowak1993calderon,rochberg1990toeplitz}. More generally though, all cancellative CZ forms {(see Definition \ref{d:cz} below)} can be realized as wavelet forms \cites{DGW,DWW}. {Let us proceed to describe in what sense bounded wavelet forms are indeed bounded.} They themselves are cancellative CZ forms so Proposition \ref{p:wave-bound} below follows from any number of results \cites{GT02,LN19,li2021endpoint,li2014sharp,nieraeth2019quantitative}. See also \cite{DGW}*{Proposition 5.1} for a direct proof of the sparse $(1,\ldots,1)$ bound for bounded wavelet forms. Let us introduce some bookkeeping to concisely describe the full range of Lebesgue space estimates for CZOs.
\begin{definition}\label{d:Holder}
Let 
	\[ P = \left\{ (p_1,\ldots,p_m) : 1<p_j \le \infty\right\}, \quad {P_\circ = \left\{ (p_1,\ldots,p_m) : 1<p_j < \infty\right\} }.\]
We introduce the shorthand for $\vec p \in P$,
	\[ L^{\vec p}(\mathbb R^d) =  \bigtimes_{j=1}^m L^{p_j}(\mathbb R^d) , \quad 
	\mathcal B^{\vec p} = \left\{ (f_1,\ldots,f_m) \in L^{\vec p} : \norm{f_j}_{L^{p_j}(\R^d)} \le 1\right\}, \quad
	r(\vec p) = \left( \sum_{j=1}^m \frac{1}{p_j}\right)^{-1}.\]
Then the admissible classes of H\"older tuples we consider are
	\[ Q = \left\{ (\vec p, r(\vec p)) : \vec p \in P, \ r(\vec p)<\infty \right\}, \quad {Q_{\mathrm{ref}} = \left\{ (\vec p,r(\vec p) : \vec p \in P_{\circ}, \ r(\vec p) > 1\right\} }.\]
Given $(\vec p,r) \in Q$, and an $m$-linear operator $T$, denote by $\norm{T}_{\vec p,r}$ the operator norm from $L^{\vec p}(\mathbb R^d) \to L^r(\mathbb R^d)$, i.e. 
	\[ \norm{T}_{\vec p,r} = \sup_{(f_1,\ldots,f_m) \in \mathcal B^{\vec p}} \norm{ T(f_1,\ldots,f_m)}_{L^r(\R^d)}.\]
Furthermore, we define the following modification at the endpoint,
	\[ \norm{T}_{\infty,\mathrm{BMO}} = \sup_{(f_1,\ldots,f_m) \in \mathcal B^{\vec p}} \norm{ T(f_1,\ldots,f_m)}_{\BMO}, \quad \vec p =(\infty,\ldots,\infty),\]
{where the $\BMO$ norm is defined by
	\begin{equation}\label{e:BMO-norm} \norm{f}_{\BMO} = \sup_{Q \, \mathrm{cube}} \frac{1}{|Q|} \int_Q \abs{ f(x)-f_Q } \, \d x, \quad f_Q = \frac{1}{|Q|} \int_Q f(y) \, \d y.\end{equation}
}\end{definition}

\begin{proposition}\label{p:wave-bound}
For each $(\vec p,r) \in Q$, there exists $C_{\vec p,r}>0$ such that for any $T$ such that $U(f_0,\ldots,f_m) = \ip{f_0}{T(f_1,\ldots,f_m)}$ is a bounded wavelet form,
	\begin{equation}\label{e:wave-bound}\norm{T}_{\vec p,r} \le C_{\vec p,r} \sup_{z \in Z^d} \abs{\ep_z}. \end{equation}
Furthermore, there exist $C_\infty>0$ {and $\bar T$ an extension of \,$T$} such that
	\begin{equation}\label{e:wave-bound-endpoint} \norm{ \bar T}_{\infty,\mathrm{BMO}} \le C_\infty \sup_{z \in Z^d} \abs{\eps_z}.\end{equation}
\end{proposition}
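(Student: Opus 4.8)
The plan is to show that a bounded wavelet form is a cancellative Calderón–Zygmund form whose kernel and weak-boundedness constants are all controlled by $\sup_{z}|\ep_z|$, and then to invoke the multilinear $T(1)$ theorem in the reflexive range \eqref{e:p} together with a standard $L^\infty$-extension argument at the endpoint. After undoing the permutation $\sigma$ (which only relabels arguments and preserves every estimate below) and normalizing $\sup_{z \in Z^d}|\ep_z|=1$, the off-diagonal kernel of the canonical wavelet form is
\[ K(x_0,x_1,\ldots,x_m) = \int_{Z^d} \ep_z\, \phi_z(x_0)\, \nu_z(x_1,\ldots,x_m)\, \d\mu(z). \]

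First I would verify the standard size and smoothness estimates. Since $\phi$ is Schwartz and compactly supported, $|\phi_z(x_0)| \lesssim t^{-d}\chi_z^{N}(x_0)$ for any $N$, while \eqref{e:chi} gives $|\nu_z(x_1,\ldots,x_m)| \le t^{-md}\chi_{\mathbf z}^{md+\delta}$ with $\mathbf z=(w,\ldots,w,t)$. Inserting these into the integral over $z=(w,t)$ and integrating first in $w$ and then in $t$ yields the size bound $|K(x_0,\ldots,x_m)| \lesssim \big(\sum_{0\le k<l\le m}|x_k-x_l|\big)^{-md}$; the Hölder continuity in each variable follows the same way from the difference bounds in \eqref{e:chi}, the extra factor $|h|^\delta$ producing exactly the decay exponent $md+\delta$. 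This is the computation carried out in \cite{DGW} and is routine once the scaling is tracked.

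The form is moreover fully cancellative. For $j\ge 1$, the testing pairing $\Lambda^{*,j}(\phi,1,\ldots,1)$ places the unit in the zeroth slot, producing the factor $\lip 1,\phi_z\rip=\int \phi_z=0$ since $\phi\in\Schw^0$; for $j=0$, testing against $\Lambda(\phi,1,\ldots,1)$ produces the factor $\int \nu_z\,\d x_1=0$ because $\nu_z\in\Psi^{m,\delta;1}_z$ is cancellative in its first variable. Hence $\bb_j=0$ for every $j$ in \eqref{e:T1}. Together with the weak boundedness property — itself a consequence of the almost-orthogonality encoded by \eqref{e:chi} and the integral over $Z^d$ (see Definition \ref{d:cz}) — the hypotheses of the multilinear $T(1)$ theorem hold with constants $\lesssim 1$. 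Invoking it via \cites{GT02,LN19,li2021endpoint,li2014sharp,nieraeth2019quantitative}, or the direct sparse bound of \cite{DGW}*{Proposition 5.1}, gives $\norm{T}_{\vec p,r}\lesssim_{\vec p,r}\sup_z|\ep_z|$ for every $(\vec p,r)\in Q$, which is \eqref{e:wave-bound}.

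For the endpoint \eqref{e:wave-bound-endpoint} I would construct $\bar T$ on $L^\infty(\R^d)^m$ by the usual device of defining it modulo constants. Fixing a cube $Q$ with center $c_Q$ and inputs $f_j\in L^\infty$, split each $f_j = (f_j)_Q + (f_j-(f_j)_Q)\mathbf 1_{3Q} + (f_j-(f_j)_Q)\mathbf 1_{(3Q)^c}$; the full-cancellation property $\bb_j=0$ lets one discard the constant terms, the local terms are controlled by the $L^{\vec q}\to L^{r(\vec q)}$ bound from \eqref{e:wave-bound} for a reflexive tuple followed by Hölder's inequality on $Q$, and the far terms converge absolutely once $K(x,\cdot)$ is replaced by $K(x,\cdot)-K(c_Q,\cdot)$ and are estimated by the smoothness bound integrated against the oscillation. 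Averaging $|\bar T(f_1,\ldots,f_m)-c_Q|$ over $Q$ then yields $\norm{\bar T}_{\infty,\mathrm{BMO}}\lesssim \sup_z|\ep_z|$. The main obstacle is precisely this endpoint: it is not covered verbatim by the cited reflexive-range $T(1)$ theorems, so one must build the extension $\bar T$, verify that the truncated singular integrals converge, and check that the resulting $\mathrm{BMO}$ class is independent of the truncation before the oscillation estimate can be run.
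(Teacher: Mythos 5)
Your argument is correct and follows essentially the same route as the paper, which offers no proof of Proposition \ref{p:wave-bound} beyond observing that bounded wavelet forms are cancellative CZ forms with all constants controlled by $\sup_{z}\abs{\ep_z}$ and then citing the multilinear $T(1)$ theorem together with the sparse bound of \cite{DGW}*{Proposition 5.1} (which also covers the quasi-Banach exponents $r<1$ and the tuples with some $p_j=\infty$ that lie outside the reflexive range \eqref{e:p}) --- precisely the reduction you carry out in detail. One small caveat on the endpoint sketch: the identity $\bb_j=0$, i.e.\ $T^{*,j}(1,\ldots,1)=0$, concerns constants in \emph{every} slot simultaneously and does not by itself let you discard a term with $(f_j)_Q$ in a single slot and arbitrary $L^\infty$ functions elsewhere; the cleaner standard argument (as in \cite{GT02}) splits only $f_j=f_j\mathbf 1_{3Q}+f_j\mathbf 1_{(3Q)^c}$ without subtracting means, controls the all-local term by a reflexive-range bound plus H\"older on $Q$, and absorbs every term containing a far component into the constant $c_Q$ via the kernel smoothness estimate \eqref{e:ker-smooth}.
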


\section{Wavelet representation of compact Calder\'on-Zygmund forms}\label{s:rep}
Let us make rigorous the informal definitions given in the introduction. 
\begin{definition}\label{d:si}
A function $K\in L^1_{\mathrm{loc}}( \mathbb R^{(m+1)d} \setminus \{x \in (\mathbb R^d)^{m+1} : x_0=\cdots=x_m\})$ is a $\delta$-\textit{singular integral kernel} if there exist $C_K,\delta>0$ such that
	\begin{align}\label{e:ker-size} &|K(x_0,\ldots,x_m)| \le \dfrac{C_K}{\abs{x_0-\mathbf{x}}_2^{md}}, \quad \mathbf x = (x_1,\ldots,x_m),\\
	\label{e:ker-smooth}& \max_{j=0,\ldots,m} |\Delta^j_h K(x_0,\ldots,x_m)| \le \dfrac{C_K|h|^\delta}{\abs{x_0-\mathbf{x}}_2^{md+\delta}},\end{align}
where $\Delta_h^j$ denotes the difference operator in the $j$-th position{, and $\abs{\cdot}_2$ is defined in \eqref{e:vec-norm}}.
{Let $\Lambda: \times_{j=0}^m \Schw(\mathbb R^d) \to \mathbb C$ be a continuous $(m+1)$-linear form. We say $\Lambda$ is an $(m+1)$-linear \textit{singular integral form} }if there exists a singular integral kernel $K$ such that for any $f_0,\ldots,f_m \in \Schw(\mathbb R^d)$ with $\cap_{j=0}^m \supp f_j= \varnothing$ one has
	\[ \Lambda(f_0,\ldots,f_m) = \int_{(\mathbb R^d)^{m+1}} K(x) \prod_{j=0}^m f_j(x_j) \, \d x. \]
When $m$ is understood, we simply say $\Lambda$ is a singular integral form.
\end{definition}
 
Let $\C_{\mathrm{v}}(\mathbb R^d)$ be the space of all continuous functions $f$ on $\R^d$ for which $\lim_{|x|\to\infty} f(x) =0$. Then, the Banach space of functions with continuous mean oscillation, $\CMO$, is defined to be the closure of $\C_{\mathrm{v}}(\mathbb R^d)$ in the norm $\norm{\cdot}_{\BMO}${, defined in \eqref{e:BMO-norm} above}. 

\begin{definition}\label{d:cz}
We say a singular integral form $\Lambda$ is a \textit{Calder\'on-Zygmund (CZ) form} if there exists $C_{\mathsf{W}}$ such that
	\[  t^{md}\abs{ \Lambda( \Sy_z \varphi_0,\ldots,\Sy_z\varphi_m) } \le C_{\mathsf{W}}, \quad \forall \varphi_j \in \C^\infty_0(B(0,1)) \cap \Psi^{1,1}_{(0,1)},\]
and furthermore, there exist $\bb_j \in \BMO$ such that \eqref{e:T1} holds. The rigorous definition of \eqref{e:T1} is as follows. Let $\theta \in \C^\infty_0(\mathbb R^d)$ with {$\theta(x)=1$ for $|x| \le 1$}. Then, for $t>0$, set $\theta_t = \theta(\cdot t)$. We say \eqref{e:T1} holds if for all $\varphi \in \Schw^0(\mathbb R^d)$,
	\begin{equation}\label{e:T1rig}\lim_{t\to 0} \Lambda^{*,j}(\varphi,\theta_t,\theta_t,\ldots,\theta_t) = \ip{\varphi}{\bb_j}.\end{equation}
{If $T$ is a continuous $m$-linear operator from $\times_{j=1}^m \Schw \to \Schw'$ such that $\ip{f_0}{T(f_1,\ldots,f_m)} = \Lambda(f_0,\ldots,f_m)$ for some CZ form $\Lambda$, then $T$ is said to be a Calder\'on-Zygmund operator (CZO).}
Furthermore, let $\phi$ be a mother wavelet and $\psi^2,\psi^4$ be the Schwartz functions from Lemma \ref{l:calderon}. For $j=1,\ldots,m$ define
	\begin{equation}\label{e:psi-e} \vec \psi_j = \left(\bigotimes_{i=1}^{j-1} \psi^2\right) \otimes \left( \bigotimes_{i=j}^{m-1} \psi^4 \right) \otimes \phi.\end{equation}
For each $M \ge 1$, define 
	\begin{equation}\label{e:WM}\mathsf{W}_\Lambda^M(\zeta) = \sup_{\mathbf z \in \mathbb B^m_M(\zeta)} \sup_{\substack{\sigma \in \mathrm{S}_{m+1} \\ j=1,\ldots,m }} \abs{ \Lambda^\sigma\left( (\vec \psi_j)_{\mathbf{z}},\phi_\zeta \right) } t^{md}, \quad \zeta = (w,t).\end{equation}
A CZ form $\Lambda$ is said to be a compact CZ form if for some (all) $M \ge 1$
	\[ \bb_j \in\CMO, \quad \lim\limits_{\zeta \to\infty} \mathsf{W}_\Lambda^M(\zeta)=0.\] 
Finally, we say a CZ form is cancellative if $\bb_j=0$.
\end{definition}
\begin{remark}\label{r:cz}
{By the symmetry in Definitions \ref{d:si} and \ref{d:cz} above, if $\Lambda$ is a CZ form or compact CZ form, then so is $\Lambda^\sigma$ for any $\sigma \in \mathrm{S}_{m+1}$, defined by \eqref{e:sigma}. Therefore, if $T$ is a CZO, so is $T^{*,j}$ for any $j=1,\ldots,m$.}
It is not too difficult to see that if $\mathsf{W}_\Lambda^M \to 0$ for some $M$ then the same holds for each $M$. {Therefore, when convenient, we may reduce to the case $M=1$ and utilize the shorthand $\mathsf{W}_\Lambda(\zeta)=\mathsf{W}_\Lambda^1(\zeta)$.}
\end{remark}
{   
\begin{remark}\label{r:ex}
	Compact wavelet forms, as defined in Definition \ref{d:wave} are compact CZ forms. Almost orthogonality of elements of the wavelet classes, e.g. \cite{DWW}*{Lemma 2.3}, and the fact that $\ep_z \to 0$ readily shows $\mathsf{W}_\Lambda(z) \to 0$ for such forms. Furthermore, since wavelet forms possess cancellation in \textit{two} inputs, all the symbols $\mathfrak b_j$ are zero.
	Compact paraproduct forms, the subject of \S \ref{s:pp} below, are also compact CZ forms. For these, $\mathsf{W}_\Lambda(z) \to0$ for the same reason as the wavelet forms. Furthermore, $\mathfrak b_0$ is the paraproduct symbol while $\mathfrak b_j=0$ for $j=1,\ldots,m$.
	Our representation theorem (Theorem \ref{thm:all} part C. below) states that in fact all compact CZ forms are a linear combination of forms from these two classes.
\end{remark}
}
Now we justify our description of such forms as compact.

\begin{definition}\label{d:compact} Let $T$ be an $m$-linear CZO. We say $T$ is a compact CZO if for each $(\vec p ,r) \in Q$, {there exists an extension of $T$, $\bar T$, such that} $\bar T(\mathcal B^{\vec p})$ is precompact in $L^r(\R^d)$, and at the upper endpoint, there exists an extension $\bar{\bar T}$ such that $\bar{\bar T}(\mathcal B^{\infty})$ is precompact in $\CMO$. {Furthermore, if the weaker condition, for all $(p,r) \in Q_{\mathrm{ref}}$ $T(\mathcal B^{{\vec p}})$ is precompact in $L^r(\mathbb R^d)$, holds, then we say $T$ is a compact CZO in the reflexive range.}
\end{definition}

\begin{remark}\label{r:compact}
The classical definition of compactness of an abstract $m$-linear operator on quasi-normed spaces \cites{BT13,COY} is that it maps bounded sets to precompact sets. In Definition \ref{d:compact}, we are imposing this definition of compactness on $T$ acting from $L^{\vec p}(\R^d) \to L^r(\R^d)$ for \textit{all} $(\vec p ,r)$ in $Q$ and at the upper endpoint because our testing conditions allow us to conclude compactness in this full range. It is tempting to only require $T:L^{\vec p}(\R^d) \to L^r(\R^d)$ be compact for a single $(\vec p,r)$, but the current state of multilinear extrapolation of compactness \cites{COY,HL2} does not include the endpoints. 
\end{remark}

{    
An important concept closely related to compact operators is that of a weakly convergent sequence. We give a straightforward sufficient condition for generating such families in $L^r(\mathbb R^d)$ parameterized by $z \in Z^d$.
\begin{lemma}\label{l:weakly}
Let $1\le q_-< r < q_+\le\infty$ and $\{ f_z \in L^{q_-}(\mathbb R^d) \cap L^{q_+}(\mathbb R^d) : z \in Z^d\}$ {be a collection of functions satisfying}
	\begin{itemize}
	\item[(i)] $\norm{f_z}_{L^r}$ is uniformly bounded,
	\item[(ii)] there exists $\beta>0$ such that $\norm{f_z}_{L^{q_-}} \lesssim t^\beta$, $\norm{f_z}_{L^{q_+}} \lesssim t^{-\beta}$,
	\item[(iii)] {There exists $c>0$ such that for $|x-w| \ge c \cdot t$}, $\abs{f_z(x)} \lesssim t^{-\frac dr} \chi_z(x)$.
	\end{itemize}
Then for any $g \in L^{r'}(\mathbb R^d)$, $\lim\limits_{z \to \infty}\ip{f_z}{g} \to 0$.
\end{lemma}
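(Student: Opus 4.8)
The plan is to reduce to a dense class of test functions using the uniform $L^r$ bound (i), and then to dispatch the three ways in which $z=(w,t)$ can escape to infinity---$t\to0$, $t\to\infty$, and $|w|\to\infty$---with (ii) controlling the extreme scales and (iii) controlling spatial escape.

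First I would note that since $q_-\ge1$ we have $r>q_-\ge1$, so $r'<\infty$ and $\C^\infty_0(\mathbb R^d)$ is dense in $L^{r'}(\mathbb R^d)$. Given $g\in L^{r'}(\mathbb R^d)$ and $\ep>0$, I would choose $\tilde g\in\C^\infty_0(\mathbb R^d)$, supported in some ball $B(0,R)$, with $\norm{g-\tilde g}_{L^{r'}}$ small; Hölder's inequality together with (i) then gives
\[ \abs{\ip{f_z}{g}}\le \Big(\sup_{z\in Z^d}\norm{f_z}_{L^r}\Big)\norm{g-\tilde g}_{L^{r'}}+\abs{\ip{f_z}{\tilde g}}, \]
so it suffices to prove $\ip{f_z}{\tilde g}\to0$ as $z\to\infty$ for compactly supported $\tilde g$.

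For such $\tilde g$, the key device is a two-stage choice of thresholds read against the definition \eqref{e:limZ}. I would first fix a small $\tau\in(0,1)$: when $t\le\tau$, pairing against $L^{q_-}$ and invoking $\norm{f_z}_{L^{q_-}}\lesssim t^\beta$ from (ii) yields $\abs{\ip{f_z}{\tilde g}}\lesssim t^\beta\norm{\tilde g}_{L^{(q_-)'}}\le\tau^\beta\norm{\tilde g}_{L^{(q_-)'}}$; symmetrically, when $t\ge\tau^{-1}$, pairing against $L^{q_+}$ and using $\norm{f_z}_{L^{q_+}}\lesssim t^{-\beta}$ gives $\abs{\ip{f_z}{\tilde g}}\lesssim\tau^\beta\norm{\tilde g}_{L^{(q_+)'}}$ (reading $(q_+)'=1$ when $q_+=\infty$). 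Choosing $\tau$ small makes both of these $<\ep$. With $\tau$ now fixed, the leftover band $\tau\le t\le\tau^{-1}$ is compact, and once $M$ is large enough that $[2^{-M},2^M]\supset[\tau,\tau^{-1}]$, the hypothesis $z\notin\mathbb B_M$ forces $|w|>2^M$. On $\supp\tilde g$ one then has $|x-w|\ge|w|-R\ge ct$ for $M$ large, so (iii) applies, and the elementary bound $\chi_z(x)\le t/|x-w|$ gives $\abs{f_z(x)}\lesssim t^{1-d/r}/(|w|-R)$. Since $t^{1-d/r}$ is bounded on $[\tau,\tau^{-1}]$, integrating over $B(0,R)$ produces $\abs{\ip{f_z}{\tilde g}}\lesssim \norm{\tilde g}_{L^1}/(2^M-R)$ up to a $\tau$-dependent constant, which tends to $0$ as $M\to\infty$.

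I expect the main obstacle to be exactly the borderline regime in which $|w|\to\infty$ while $t$ is allowed to range over the entire slab $[2^{-M},2^M]$: there the prefactor $t^{1-d/r}$ generated by (iii) is not uniformly controlled as $M\to\infty$, so (iii) cannot be applied across the whole slab at once. Fixing $\tau$ first---absorbing the extreme scales into (ii)---and only afterwards sending $|w|\to\infty$ on the now-compact intermediate band is what decouples the scale degeneracy from the spatial decay. Everything else is routine: Hölder's inequality, the inequality $(1+a^2)^{-1/2}\le a^{-1}$, and the bookkeeping of thresholds against \eqref{e:limZ}.
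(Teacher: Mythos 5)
Your proof is correct and follows essentially the same route as the paper: reduce by density and (i) to compactly supported $\tilde g$, then bound $\abs{\ip{f_z}{\tilde g}}$ by the minimum of three H\"older pairings, with (ii) handling the regimes $t$ small or large and (iii) handling $|w|\to\infty$ on the remaining compact band of scales. The paper states this in three lines; your two-stage threshold bookkeeping (fix $\tau$ first, then send $M\to\infty$) is just the careful writing-out of the same argument.
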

\begin{proof}
By (i) and a standard density argument, it is enough to prove the conclusion for $g \in \C^\infty_0(\mathbb R^d)$. Suppose $\supp g \subset B(0,R)$. The main step is the trivial consequence of H\"older's inequality.
	\[ \abs{ \ip{f_z}{g} } \le \min \left\{ \norm{f_z}_{L^\infty(B(0,R))} \norm{g}_{L^1}, \norm{f_z}_{L^{q_-}} \norm{g}_{L^{q_-'}},\norm{f_z}_{L^{q_+}} \norm{g}_{L^{q_+'}} \right\}.\]
(iii) ensures that the first quantity is smaller when $z=(w,t)$, $t \sim 1$, and $|w|$ is much larger than $R{+c}$. (ii) controls the latter two quantities when $t$ is small or large.
\end{proof}
\begin{proposition}\label{p:nec}
Let $T$ be an $m$-linear CZO. First, if $T$ is compact in the reflexive range, then for any $\varphi^1,\ldots,\varphi^m \in L^\infty$ with compact support, and any $j=0,1,\ldots,m$,
	\begin{equation}\label{e:local-test} \lim_{z \to \infty} t^{md-\frac d2}\norm{T^{*,j} (\varphi ^1_z,\ldots,\varphi ^m_z)}_{L^2(\mathbb R^d)}=0, \quad \varphi_z^i = \Sy_z \varphi^i.\end{equation}
Second, if \eqref{e:local-test} holds, then each $\bb_j$ belongs to $\CMO$ for $j=0,\ldots,m$, and for each $M \ge 1$,
	\begin{equation}\label{e:W} \lim_{\zeta \to \infty} \mathsf{W}_\Lambda^M(\zeta) =0.\end{equation}
\end{proposition}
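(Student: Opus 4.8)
The plan is to establish the two implications in turn. For the first, fix $j\in\{0,\ldots,m\}$ and select the H\"older tuple $\vec p=(2m,\ldots,2m)$, so that $r(\vec p)=2$ and $(\vec p,2)\in Q_{\mathrm{ref}}$. Compactness of $T$ in the reflexive range is inherited by each adjoint $T^{*,j}$ (the multilinear analogue of Schauder's theorem on duality of compactness), so $T^{*,j}(\B^{\vec p})$ is precompact in $L^2(\R^d)$. Normalizing the inputs by $g^i_z=t^{d/p_i'}\varphi^i_z/\norm{\varphi^i}_{L^{p_i}}$ places $(g^1_z,\ldots,g^m_z)\in\B^{\vec p}$, and since $\sum_i d/p_i'=md-\tfrac d2$, tracking the dilation exponents collapses the target quantity to
\[ t^{md-\frac d2}\norm{T^{*,j}(\varphi^1_z,\ldots,\varphi^m_z)}_{L^2}=\Big(\prod_i\norm{\varphi^i}_{L^{p_i}}\Big)\norm{h_z}_{L^2},\qquad h_z:=T^{*,j}(g^1_z,\ldots,g^m_z). \]
Thus it suffices to prove $\norm{h_z}_{L^2}\to0$ as $z\to\infty$.

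A precompact family whose members tend weakly to zero tends strongly to zero; I make this quantitative by contradiction. If $\norm{h_{z_n}}_{L^2}\ge\ep$ along some $z_n\to\infty$, precompactness extracts a subsequence with $h_{z_{n_k}}\to h$ in $L^2$ and $\norm h_{L^2}\ge\ep$, while weak convergence $h_z\rightharpoonup0$ forces $h=0$, a contradiction. The weak convergence $\ip{h_z}{g}\to0$ for all $g\in L^2(\R^d)$ is precisely Lemma \ref{l:weakly} with $r=2$, whose three hypotheses I verify for $h_z$: (i) is the $L^2$-boundedness of the CZO $T^{*,j}$; (ii) follows by estimating $h_z$ in $L^{q_\pm}$ via the auxiliary tuples $\vec p^\pm=(q_\pm m,\ldots,q_\pm m)$ with $1<q_-<2<q_+<\infty$, giving $\norm{h_z}_{L^{q_-}}\lesssim t^{\beta_-}$ and $\norm{h_z}_{L^{q_+}}\lesssim t^{-\beta_+}$ with $\beta_\pm=d\abs{\tfrac1{q_\pm}-\tfrac12}>0$; and (iii) is off-diagonal decay, read off from the size bound \eqref{e:ker-size}: for $\abs{x-w}\ge ct$ with $c$ exceeding the radii of the supports, $\abs{h_z(x)}\lesssim\abs{x-w}^{-md}\prod_i\norm{g^i_z}_{L^1}\lesssim t^{md-\frac d2}\abs{x-w}^{-md}\lesssim t^{-\frac d2}\chi_z(x)$. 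This proves \eqref{e:local-test} for every $j$.

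For the converse, consider first $\mathsf W^M_\Lambda$ in \eqref{e:WM}. For each $\sigma$, writing $\Lambda^\sigma((\vec\psi_j)_{\mathbf z},\phi_\zeta)=\ip{f_{\mathrm{out}}}{T^{*,k}(\mathrm{inputs})}$ for the adjoint index $k$ dictated by $\sigma$, both $f_{\mathrm{out}}$ and the $m$ inputs are $L^1$-normalized bumps assembled from the fixed Schwartz functions $\psi^2,\psi^4,\phi$. Because $\mathbf z\in\mathbb B^m_M(\zeta)$ keeps all scales comparable to $t$ and all centers within $O(t)$ of $w$, each input equals $\Sy_\zeta$ of a shape taken from the compact family obtained by dilating and translating $\psi^2,\psi^4,\phi$ over $[2^{-M},2^M]\times\overline{B(0,C2^M)}$. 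Cauchy--Schwarz with $\norm{f_{\mathrm{out}}}_{L^2}\sim t^{-d/2}$ yields $\mathsf W^M_\Lambda(\zeta)\lesssim t^{md}t^{-\frac d2}\norm{T^{*,k}(\mathrm{inputs})}_{L^2}$. The pivotal step is to upgrade \eqref{e:local-test}, which is stated for a fixed tuple, to a limit that is uniform over this compact family: the map $\vec\varphi\mapsto t^{md-\frac d2}\norm{T^{*,k}(\varphi^1_\zeta,\ldots,\varphi^m_\zeta)}_{L^2}$ is uniformly bounded and, by multilinearity, uniformly Lipschitz in $\vec\varphi\in\prod_iL^{p_i}$, so a finite $\ep$-net reduces the uniform statement to finitely many instances of \eqref{e:local-test}. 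Hence $\norm{T^{*,k}(\mathrm{inputs})}_{L^2}\le t^{\frac d2-md}o(1)$ and $\mathsf W^M_\Lambda(\zeta)\to0$.

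It remains to show $\bb_j\in\CMO$. By Uchiyama's characterization, membership in $\CMO$ is equivalent to the vanishing of the mean oscillation of $\bb_j$ over cubes that shrink, expand, or escape to spatial infinity, which in turn is equivalent to $\sup_\varphi\abs{\ip{\varphi_\zeta}{\bb_j}}\to0$ as $\zeta\to\infty$, the supremum over mean-zero $L^\infty$ bumps $\varphi$ supported in the unit ball. Realizing $\ip{\varphi_\zeta}{\bb_j}=\lim_{s\to0}\Lambda^{*,j}(\varphi_\zeta,\theta_s,\ldots,\theta_s)$ via \eqref{e:T1rig} and inserting $1=\eta+(1-\eta)$ in every slot, with $\eta$ a smooth cutoff to $B(w,2Rt)$, splits the multilinear expansion into an all-local term and far terms. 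The all-local term $\ip{\varphi_\zeta}{T^{*,j}(\eta,\ldots,\eta)}$ is $o(1)$ as $\zeta\to\infty$ after Cauchy--Schwarz and the first statement \eqref{e:local-test}. In each far term some slot carries $(1-\eta)$, so the supports are disjoint and the kernel representation applies; subtracting $K^{*,j}(w,\cdot)$, exploiting $\int\varphi_\zeta=0$, and using the smoothness estimate \eqref{e:ker-smooth} bounds that term by $\lesssim C_KR^{-\delta}$, \emph{uniformly} in $\zeta$ and $\varphi$. Fixing $R$ large makes the far terms uniformly small, after which the local term vanishes as $\zeta\to\infty$; hence $\ip{\varphi_\zeta}{\bb_j}\to0$ and $\bb_j\in\CMO$. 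The main obstacle is this last step: one must pin down the oscillation characterization of $\CMO$ and its equivalence with the testing limit, make the $s\to0$ limit and the cutoff splitting rigorous, and --- most importantly --- verify that the scale/translation covariance of the kernel bounds renders the far-term gain a clean $R^{-\delta}$ with no dependence on the position or scale of $\zeta$, so that the two independent smallness mechanisms (large $R$ for the tail and large $\zeta$ for the local testing) decouple. The uniform upgrade of \eqref{e:local-test} over the compact family in the $\mathsf W^M_\Lambda$ step is the secondary, softer obstacle.
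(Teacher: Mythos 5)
The first implication in your argument rests on the claim that compactness of $T$ in the reflexive range ``is inherited by each adjoint $T^{*,j}$'' via a multilinear analogue of Schauder's theorem. This is a genuine gap: no such theorem is available for $m\ge 2$, and the naive multilinear Schauder statement is false. For instance, the bilinear form $T(x,y)=\sum_n x_ny_n$ on $\ell^2\times\ell^2$ is compact (its range is one-dimensional), yet the adjoint $T^{*,1}(\lambda,y)=\lambda y$ maps the unit ball onto the unit ball of $\ell^2$, which is not precompact. Worse, compactness of the adjoints is precisely part of what the overall theorem is establishing (the implication A.\ to E.\ in Theorem \ref{thm:all} passes through this very proposition), so assuming it at the outset is circular. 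The paper's proof avoids the adjoints entirely: setting $b^j_z:=t^{md-\frac d2}T^{*,j}(\varphi^1_z,\ldots,\varphi^m_z)$, it writes $\norm{b^j_z}_{L^2}^2=\ip{f^0_z}{T(f^1_z,\ldots,f^m_z)}$ with $b^j_z$ placed in the $j$-th \emph{input} slot of $T$ itself, choosing the mixed tuple $p_j=2$ and $p_i=4(m-1)$ for the remaining inputs so that $(\vec p,r(\vec p))\in Q_{\mathrm{ref}}$, and then plays the compactness of $T$ against the weak null convergence of the outer function $f^0_z$ (a rescaled bump when $j\ge1$, and $b^0_z$ itself when $j=0$, the latter handled by Lemma \ref{l:weakly}). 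Your first step must be replaced by this or an equivalent device; only for $j=0$, or in the linear case $m=1$ where Schauder's theorem genuinely applies, is your argument sound as written.

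The remainder of your proposal follows the paper's route. Your $\varepsilon$-net upgrade of \eqref{e:local-test} to a limit uniform over the compact family of shapes generated by $\mathbb B^m_M(\zeta)$ is a legitimate, and in fact more explicit, justification of the paper's one-line Cauchy--Schwarz deduction of \eqref{e:W}. Your $\CMO$ argument---localizing at scale $Rt$, using the kernel smoothness \eqref{e:ker-smooth} together with the cancellation of the outer bump to gain $R^{-\delta}$ uniformly in $\zeta$, then invoking \eqref{e:local-test} for the all-local term---is the same two-parameter mechanism the paper runs with oscillations, namely $\osc_z(E)\lesssim a^{-\delta}$ followed by $\osc_z(T^{*,j}[\varphi_\zeta,\ldots,\varphi_\zeta])\lesssim_a\norm{b^j_\zeta}_{L^2}$; the points you flag as remaining obstacles (the rigorous $s\to0$ limit via the extension \eqref{e:Linfty}, and the uniformity in $\zeta$ of the far-term gain) are real but routine and are carried out in the paper.
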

\begin{proof} To prove the first conclusion, set $b_z^j = t^{md-\frac d2} T^{*,j} (\varphi ^1_z,\ldots,\varphi ^m_z)$. Since $T^{*,j}$ is bounded from $\times_{j=1}^m L^{qm}(\mathbb R^d) \to L^q(\mathbb R^d)$ for any $1<q<\infty$ (being a CZO) we have 
	\begin{equation}\label{e:b-norm-q} \norm{b_z^j}_{L^q} \lesssim t^{\frac dq-\frac d2}. \end{equation} 
Furthermore,
	\begin{equation}\label{e:id} \norm{b_z^j}_{L^2}^2 = t^{md-\frac d2} \ip{b_z^j}{T^{*,j} (\varphi ^1_z,\ldots,\varphi ^m_z)} = \ip{f^0_z}{T(f^1_z,\ldots,f^m_z)},\end{equation}
for $f^i_z$ which are appropriate permutations and normalizations of $\varphi ^i_z$ and $b^j_z$. To assign them precisely, we distinguish between $j=0$ and $j =1,\ldots,m$. In the first case, we can simply take
	\[ f_z^i = 
		\left\{ \begin{array}{ll}
			t^{d(1-\frac 1{2m})}\varphi _z^i, & i =1,\ldots,m; \\
			b_z^0, & i=0;\end{array} \right.  \quad p_i = 2m, \ i=1,\ldots,m; \quad r=2.\]
And for $j =1,\ldots,m$ then
	\[ f_z^i = 
		\left\{ \begin{array}{ll}
			t^{d(1-\frac{1}{4(m-1)})} \varphi _z^i, & i \not\in\{0,j\}; \\
			t^{d(1-\frac1{4})}\varphi _z^{j}, & i=0; \\
			b_z^j, & i=j; \end{array} \right.  \quad 
	p_i = 
		\left\{ \begin{array}{ll}
			 4(m-1), & i \not\in\{0,j\}; \\
			2, & i=j; \end{array} \right. \quad r=4.  \]
In either case, as defined, 
$\{ (f^1_z,\ldots,f^m_z), z \in Z^d \}$ is a bounded set in $L^{\vec p}$. Crucially, by Lemma \ref{l:weakly}, $f^0_z$ converges weakly to zero in $L^{r'}(\mathbb R^d)$. In the case $f^0_z=t^{d(1-\frac 1r)}\varphi ^{j}_z$ the conditions are readily verified for $q_-=1$ and $q_+=\infty$. The more challenging case, when $f^0_z = b_z^0$ (the case $j=0$ and $r=2$), also satisfies the assumptions of Lemma \ref{l:weakly}. Indeed, (i) and (ii) are consequences of \eqref{e:b-norm-q}. {To establish (iii), take $c>0$ large enough that all $\varphi^j$ are supported in the ball $B(0,\frac c2)$. Then, since $\varphi^j_z$ is supported in the ball $B(w,\frac {ct}2)$, for $x$ outside $B(w,ct)$, \eqref{e:ker-size} implies $K(x,\mathbf x) \lesssim |x-w|^{-md}$. The condition (iii) is then a consequence of
	\[ b^0_z(x) 
	= t^{md-\frac d2}\int_{\mathbb R^{md}} K(x,\mathbf x) \prod_{j=1}^m \varphi_z^j(x_j)  \, \d\mathbf x \lesssim t^{-\frac d2} \left( \frac{|x-w|}{t} \right)^{-md} \lesssim t^{-\frac d2} \chi_z(x). \]
}Now suppose \eqref{e:local-test} fails. Then, by \eqref{e:id}, there exist $\ep>0$ and $z_n \to \infty$ such that
	\begin{equation}\label{e:contra} \abs {\ip{f^0_{z_n}}{T(f^1_{z_n},\ldots,f^m_{z_n})} } \ge \ep.\end{equation}
As $T$ was assumed to be compact from $L^{\vec p} \to L^r$, there exists a subsequence $\zeta_n \to \infty$ such that $\{T(f^1_{\zeta_n},\ldots,f^m_{\zeta_n})\}$ converges in $L^r$. Combined with the weak convergence to zero of $f^0_{\zeta_n}$, this contradicts \eqref{e:contra}.

To prove the second statement, Cauchy-Schwarz immediately tells us that \eqref{e:W} follows from \eqref{e:local-test}. The proof of $\mathfrak b_j \in\CMO$ is more delicate. To do so, we define the function $\mathfrak c_j \in \BMO$ by $\mathfrak c_j = T^{*,j}[1,1,\ldots,1]$, where $T[\cdot]$ denotes the now classical extension of $T$ to elements of $\times_{j=1}^m L^\infty(\mathbb R^d)$, 
	\begin{equation}\label{e:Linfty}T[f_1,\ldots,f_m]:= \lim_{t\to 0} T(f_1 \theta_t,\ldots,f_m \theta_t) - c_t, \end{equation}
{where the limit exists, distributionally, locally in $L^1$, and pointwise a.e.}, and $c_t$ are suitable constants depending $(m+1)$-linearly on $T,f_1,\ldots,f_m$ with $\lim_{t \to 0} c_t = c_0 \in \mathbb C$; see e.g. \cite{GT02}*{Lemma 1 and Proposition 1}. Notice that when $f_j \in L^\infty(\mathbb R^d) \cap \mathcal S(\mathbb R^d)$, the definition \eqref{e:Linfty} differs from $T(f_1,\ldots,f_m)$ but only by a constant. Therefore, when we view \eqref{e:Linfty} as an element of $\BMO$, which consists of equivalence classes modulo constants, there is no contradiction. Clearly $\mathfrak c_j =\mathfrak b_j$ in our distributional sense \eqref{e:T1rig} as well. It is therefore enough to show $\mathfrak c_j \in \CMO$, which is well-known \cite{Uch78} to be equivalent to showing for each $\ep>0$ there exists $M>0$ such that
	\[ \sup_{z \not\in \mathbb B_M} \osc_z(\mathfrak c_j) < \ep, \quad \osc_z(f) = \frac{1}{t^d} \int_{B(w,t)} \abs{f-f_z} , \quad f_z= \frac{1}{t^d} \int_{B(w,t)} f .\] Let $\varphi \in L^\infty$ satisfy $1_{\{|x| \le 1 \}} \le \varphi \le 1_{\{|x| \le 2 \}}$ and for $\zeta = (\xi,\sigma) \in Z^d$ to be chosen later, we {set $\varphi_\zeta = \sigma^d \Sy_\zeta \varphi$}. In the sense of \eqref{e:Linfty}, split $\mathfrak c_j = E + T^{*,j}[\varphi _\zeta,\ldots,\varphi _\zeta]$, where
	\[ E = T^{*,j}[1-\varphi _\zeta,1,\ldots,1] + T^{*,j}[\varphi _{\zeta},1-\varphi _{\zeta},1,\ldots,1] + \cdots + T^{*,j}[\varphi _{\zeta},\ldots,\varphi _{\zeta},1-\varphi _{\zeta}].\]
{Let now $z=(w,t)$ and we will estimate $\osc_z(E)$ using the smoothness estimate for the kernel \eqref{e:ker-smooth}. To this end, for $s>0$, let $E_s^k = T^{*,j}(\varphi_\zeta \theta_s,\ldots,\varphi_\zeta\theta_s,(1-\varphi_\zeta)\theta_s,\theta_s,\ldots,\theta_s)$, where $(1-\varphi_\zeta)$ is in the $k$-th position. Then, for any $a \ge 2$, set $\zeta = (w,at)$. The following estimate holds uniformly over $s>0$, $a \ge 2$, and $x,y \in B(w,t)$,
	\[ |E_s^k(x) - E_s^k(y)| \lesssim t^\delta \int_{\substack{\mathbf x \in \mathbb R^{md} \\ |x_{k}-x| \gtrsim at}} |x-\mathbf x|_2^{-md-\delta} \, \d \mathbf x.\]
The remaining integral is controlled by $(at)^{-\delta}$. Passing to the limit, $|E(x)-E(y)| \lesssim a^{-\delta}$ and hence $\osc_z(E) \lesssim a^{-\delta}$.} Second, by H\"older's inequality and the fact that $T$ and $T[\cdot]$ only differ by a constant,
	\[ \osc_z(T^{*,j}[\varphi _\zeta,\ldots,\varphi _\zeta]) = (at)^{\frac d2} \osc_z(b^j_\zeta) \lesssim_a \norm{b^j_{\zeta}}_{L^2}.\] 
Therefore, given $\ep>0$, first pick $a$ large enough that $\osc_z(E) < \frac \ep 2$. Then, by \eqref{e:local-test}, we can pick $M$ large enough that for $\zeta \not\in \mathbb B_M$, $\osc_z(T^{*,j}[\varphi _\zeta,\ldots,\varphi _\zeta]) < \frac \ep 2$. But there exists $N \gtrsim_a M$ such that if $z =(w,t) \not\in \mathbb B_N$ then $\zeta=(w,at) \not \in \mathbb B_M$.
\end{proof}
}

The main step in the representation theorem is given in Proposition \ref{coeffdecaycondition} below. There we will show that a compact cancellative CZ form enjoys additional decay in the wavelet basis. For a general cancellative CZ form, we recall the following lemma from \cite{DGW}*{Lemma 3.3} regarding its decay when applied to a $(m+1)$-tuple of wavelets. When we want to specify the value of the smoothness parameter $\delta>0$ from Definition \ref{d:si}, we say $\Lambda$ is a $\delta$-CZ form.
\begin{lemma}\label{l:cz} Let $\Lambda$ be a cancellative $\delta$-CZ form, $\eta \in (0,\delta)$, and $\psi_j \in \C^{\infty}_0( B(0,1))$, $j=1,\ldots,m-1$. Then, there exists $C>0$ such that for all $\sigma \in \mathrm{S}_{m+1}$, $\zeta \in Z^d$, and $\mathbf{z} \in Z^{md}$ with $\mathbf{z} \ge \zeta$,
	\[	\left|\Lambda^\sigma \left( \vec\psi_{\mathbf{z}},\phi_{\zeta} \right ) \right| \le C \left [\mathbf{z},\zeta \right ]_{\eta}, \quad \vec \psi = \psi_1 \otimes \cdots \otimes \psi_{m-1} \otimes \phi.\]
\end{lemma}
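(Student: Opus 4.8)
The plan is to exploit the two cancellations available — the mean-zero property of the finest-scale wavelet $\phi_\zeta$ and the cancellativity of $\Lambda$ — and to run a decomposition in the geometry of the supports. First I would reduce the problem: by Remark \ref{r:cz} each $\Lambda^\sigma$ is again a cancellative $\delta$-CZ form with the same constants, so it suffices to bound $\abs{\Lambda(h_0,\ldots,h_m)}$, where $h_0,\ldots,h_m$ is the list consisting of the $m$ coarse factors of $\vec\psi_{\mathbf z}$ (each $L^1$-normalized and supported in a ball of radius $\sim s$ about some $w_k$, with the last factor $\phi$ mean zero) together with the fine wavelet $\phi_\zeta$, placed in some slot $j_0$. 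Since $\mathbf z \ge \zeta$ we have $\min\{s,t\}=t$, so $\phi_\zeta$ is the finest-scale, mean-zero input, and the smoothness estimate \eqref{e:ker-smooth} is available in slot $j_0$. Writing $R=\max\{s,\abs{\mathbf w-w_0}_2\}$, the goal becomes $\abs{\Lambda(h_0,\ldots,h_m)} \lesssim t^\delta R^{-md-\delta}$, which, after relaxing $\delta$ to any $\eta<\delta$, is precisely $[\mathbf z,\zeta]_\eta$.

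The main step is to use the cancellation $\int \phi_\zeta = 0$ by subtracting the kernel value at the center $w_0$ in slot $j_0$, and then to split the integration region according to the distance $\rho = \abs{\mathbf x - x_{j_0}}_2$ from the fine variable to the remaining cluster. On the off-diagonal piece $\rho \gtrsim t$ — which is the only piece present in the separated regime $\abs{\mathbf w - w_0}_2 \gg s$ — the smoothness estimate \eqref{e:ker-smooth} yields the factor $t^\delta \rho^{-md-\delta}$; integrating this against the product of the $L^1$-normalized coarse bumps, whose supports confine $\rho$ to the range $[t,\lesssim R]$, produces the claimed bound $t^\delta R^{-md-\delta}$ with no loss.

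The delicate part, where I expect the main obstacle, is the near-diagonal overlapping regime $\rho \lesssim t$ with $\abs{\mathbf w - w_0}_2 \lesssim s$: there the singular kernel \eqref{e:ker-size} and the cancellation of $\phi_\zeta$ exactly balance, so smoothness alone gives no gain. The resolution I would pursue is to freeze the coarse bumps at their values near $w_0$. The difference between a coarse bump and its frozen constant is controlled by the coarse-scale H\"older modulus, which supplies exactly the missing gain $(t/s)^\delta$, while the residual $t$-scale form is estimated by the weak boundedness property from Definition \ref{d:cz}. The leading, constant-valued term is then $\Lambda$ tested against constants in the coarse slots and the mean-zero $\phi_\zeta$ in slot $j_0$; this is precisely where cancellativity $\bb_j=0$ is used to kill it.

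Collecting the three regimes gives the estimate with exponent $\delta$, and the passage to any $\eta<\delta$ absorbs the borderline dyadic summation between the near-diagonal and off-diagonal contributions (and the measure of the annuli $\rho \sim 2^k t$), which is what forces the small loss in the statement. I expect the bookkeeping in the overlapping regime, in particular tracking how the two cancellations and the weak boundedness property interact across the slot position $j_0$, to be the only genuinely technical point; the separated and off-diagonal regimes are routine applications of \eqref{e:ker-size} and \eqref{e:ker-smooth}.
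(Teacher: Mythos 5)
First, a point of reference: the paper does not prove this lemma itself; it is quoted verbatim from \cite{DGW}*{Lemma 3.3}, so there is no internal proof to compare against. Your overall strategy --- reduce by symmetry, place the fine mean-zero wavelet $\phi_\zeta$ in one slot, use kernel smoothness \eqref{e:ker-smooth} against $\int\phi_\zeta=0$ in the separated regime, and in the overlapping regime freeze the coarse bumps at $w_0$, kill the constant term by cancellativity, and use the weak boundedness property for the local piece --- is the standard route and is surely close to what \cite{DGW} does.

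There is, however, a concrete quantitative gap in your treatment of the overlapping regime $\abs{\mathbf w - w_0}_2 \lesssim s$. You claim that on the off-diagonal piece $\rho = \abs{\mathbf x - x_{j_0}}_2 \gtrsim t$ the smoothness estimate ``produces the claimed bound $t^\delta R^{-md-\delta}$ with no loss.'' It does not: integrating $t^\delta \rho^{-md-\delta}$ against the $L^1$-normalized coarse bumps (density $\sim s^{-md}$) over $t \le \rho \lesssim s$ gives
\begin{equation}
	t^\delta s^{-md} \int_t^{Cs} \rho^{-1-\delta}\, \d\rho \sim t^\delta s^{-md} t^{-\delta} = s^{-md},
\end{equation}
i.e.\ the integral is dominated by the inner endpoint and the $t^\delta$ gain from the cancellation of $\phi_\zeta$ is entirely consumed; you recover only $[\mathbf z,\zeta]_0$, not $[\mathbf z,\zeta]_\eta$. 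Your freezing argument, as described, repairs only the near-diagonal piece $\rho \lesssim t$, so the intermediate annuli $t \lesssim \rho \lesssim s$ are left without any $(t/s)$-gain and the three regimes do not sum to the claimed estimate. The fix is to perform the subtraction globally on the coarse slots: replace each coarse factor $g$ by $\bigl(g - g(w_0)\theta\bigr) + g(w_0)\theta$ with $\theta$ a plateau at scale $s$ covering all supports, so that on the annulus $\rho \sim 2^k t$ the difference carries the extra factor $(\rho/s)^\delta$ from the coarse H\"older modulus. This converts the integral above into $t^\delta s^{-md-\delta}\int_t^{Cs}\rho^{-1}\,\d\rho = t^\delta s^{-md-\delta}\log(Cs/t)$, which is exactly the borderline logarithm that forces $\eta<\delta$; the leading term $g(w_0)\Lambda(\ldots,\theta,\ldots,\phi_\zeta)$ must then itself be split into $1$ (killed by $\bb_j=0$) and $1-\theta$ (supported at distance $\gtrsim s$ from $w_0$, handled by \eqref{e:ker-smooth} with gain $(t/s)^\delta$). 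Your intuition that the $\eta<\delta$ loss comes from a borderline dyadic summation is correct, but it is inconsistent with your ``no loss'' claim for the off-diagonal piece --- the loss lives precisely there, and only after the global freezing. In the genuinely multilinear case the freezing must also be telescoped across the $m$ coarse slots so that the fully-frozen term is the one killed by cancellativity, with each partially-frozen cross term carrying at least one H\"older gain; this is the bookkeeping you correctly flag but do not carry out.
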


\begin{proposition} \label{coeffdecaycondition}
Let  $\Lambda$ be a cancellative compact $\delta$-CZ form and $\eta \in (0,\delta)$. Let $\phi$ be the mother wavelet and let $\vec \psi_j$ be defined as in \eqref{e:psi-e}. Then there exists $C>0$ and $\left\{\eps_\zeta \in \mathbb{C} : \zeta \in Z^d \right\}$ with the properties that \begin{equation}\label{e:eps}  \sup_{\zeta \in Z^d} \abs{\eps_\zeta} \le C, \quad \lim_{\zeta \to \infty} \abs{\eps_\zeta}=0, \quad \max_{\substack{\sigma \in \mathrm{S}_{m+1} \\ j=1,\ldots,m }} \left|\Lambda^\sigma  \left( (\vec\psi_j)_{\mathbf{z}}, \phi_{\zeta}\right ) \right| \leq \abs{\eps_{\zeta}} \left[ \mathbf{z},\zeta\right]_{\eta}, \quad \mathbf{z} \geq \zeta.    \end{equation}
\end{proposition}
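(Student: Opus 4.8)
The plan is to define the coefficient directly as the normalized supremum
\[ \eps_\zeta = \sup_{\mathbf z \ge \zeta} \ \max_{\substack{\sigma \in \mathrm{S}_{m+1} \\ j=1,\ldots,m}} \frac{\abs{\Lambda^\sigma\left((\vec\psi_j)_{\mathbf z},\phi_\zeta\right)}}{\left[\mathbf z,\zeta\right]_\eta}, \]
so that the pointwise estimate in \eqref{e:eps} holds by construction. With this choice the two nontrivial tasks are the uniform bound $\sup_\zeta \eps_\zeta \le C$ and the decay $\eps_\zeta \to 0$. The uniform bound is immediate from Lemma \ref{l:cz}: by Lemma \ref{l:calderon}, each of $\psi^2,\psi^4$ is a mean-zero function in $\C^\infty_0(B(0,1))$, so each $\vec\psi_j$ from \eqref{e:psi-e} has the tensor form $\psi_1\otimes\cdots\otimes\psi_{m-1}\otimes\phi$ covered by that lemma, giving $\abs{\Lambda^\sigma((\vec\psi_j)_{\mathbf z},\phi_\zeta)} \le C[\mathbf z,\zeta]_\eta$ for all $\mathbf z \ge \zeta$ and in particular $\eps_\zeta \le C < \infty$.

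The crux is the decay, and here the difficulty is that the compactness hypothesis supplies only the decay of $\mathsf W_\Lambda^M(\zeta)$ as $\zeta\to\infty$, which by \eqref{e:WM} controls $\Lambda^\sigma$ solely for $\mathbf z$ in the hyperbolic ball $\mathbb B_M^m(\zeta)$. On that ball, $s\sim_M t$ and $|\mathbf w-w_0|_2\lesssim_M t$ force $[\mathbf z,\zeta]_\eta\sim_M t^{-md}$; more precisely $[\mathbf z,\zeta]_\eta \ge 2^{-M(md+2\eta)}t^{-md}$, so that the quotient defining $\eps_\zeta$ obeys $\frac{\abs{\Lambda^\sigma}}{[\mathbf z,\zeta]_\eta}\le 2^{M(md+2\eta)}\abs{\Lambda^\sigma}\,t^{md}$, whence
\[ \sup_{\substack{\mathbf z\in\mathbb B_M^m(\zeta)\\ \mathbf z\ge\zeta}}\ \max_{\sigma,j}\frac{\abs{\Lambda^\sigma((\vec\psi_j)_{\mathbf z},\phi_\zeta)}}{[\mathbf z,\zeta]_\eta} \le 2^{M(md+2\eta)}\,\mathsf W_\Lambda^M(\zeta), \]
which tends to $0$ as $\zeta\to\infty$ for each fixed $M$. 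The main obstacle is thus the complementary tail $\mathbf z\notin\mathbb B_M^m(\zeta)$, which the compactness assumption does not reach and on which only the uniform Lemma \ref{l:cz} bound is available.

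To close this gap I would exploit the slack $\eta<\delta$: fix $\eta'\in(\eta,\delta)$ and reapply Lemma \ref{l:cz} with the larger smoothness exponent $\eta'$ to obtain $\abs{\Lambda^\sigma((\vec\psi_j)_{\mathbf z},\phi_\zeta)}\le C[\mathbf z,\zeta]_{\eta'}$. Since $\mathbf z\ge\zeta$ gives $\min\{s,t\}=t$, a direct computation from \eqref{e:Delta1} yields
\[ \frac{[\mathbf z,\zeta]_{\eta'}}{[\mathbf z,\zeta]_\eta} = \left(\frac{t}{\max\{s,|\mathbf w-w_0|_2\}}\right)^{\eta'-\eta}, \]
and for $\mathbf z\notin\mathbb B_M^m(\zeta)$ one has $\max\{s,|\mathbf w-w_0|_2\}> t2^M$, so this ratio is at most $2^{-M(\eta'-\eta)}$, \emph{uniformly in} $\zeta$. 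Hence the tail contribution to $\eps_\zeta$ is bounded by $C2^{-M(\eta'-\eta)}$ for every $\zeta$. The decay then follows by the usual two-parameter argument matching the definition \eqref{e:limZ}: writing $\eps_\zeta$ as the maximum of its ball part and its tail part, given $\ep>0$ I would first choose $M$ so large that $C2^{-M(\eta'-\eta)}<\ep$ controls the tail uniformly, and then, with $M$ now fixed, use the displayed ball estimate together with $\mathsf W_\Lambda^M(\zeta)\to0$ to find $R$ so that the ball part is $<\ep$ whenever $\zeta\notin\mathbb B_R$. This gives $\eps_\zeta<\ep$ for all such $\zeta$, establishing $\lim_{\zeta\to\infty}\eps_\zeta=0$ and completing \eqref{e:eps}.
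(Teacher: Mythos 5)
Your proof is correct, and it reaches the conclusion by a genuinely different (and somewhat more elementary) organization than the paper's. Both arguments rest on the same two inputs --- the unconditional bound of Lemma \ref{l:cz} with a smoothness exponent strictly larger than $\eta$, and the decay $\mathsf{W}_\Lambda^M(\zeta)\to 0$ for every fixed $M$ (Remark \ref{r:cz}) --- but you deploy them via a single cut of the region $\{\mathbf z\ge\zeta\}$ into $\mathbb B_M^m(\zeta)$ and its complement: on the ball the quotient is controlled by $2^{M(md+2\eta)}\mathsf{W}_\Lambda^M(\zeta)$, off the ball the gap $\eta'-\eta>0$ yields the uniform-in-$\zeta$ gain $2^{-M(\eta'-\eta)}$, and a standard $\ep$-then-$M$-then-$R$ argument closes the limit in the sense of \eqref{e:limZ}. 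The paper instead decomposes $\{\mathbf z \ge \zeta\}$ into dyadic annuli $\mathbb B_n^m(\zeta)\setminus\mathbb B_{n-1}^m(\zeta)$, and on each annulus takes a geometric mean of the Lemma \ref{l:cz} bound at exponent $\delta_2$ with the bound $t^{-md}\min\{1,\mathsf{W}_\Lambda^n(\zeta)\}$, choosing the interpolation parameter $\theta$ so that the product collapses to $[\mathbf z,\zeta]_{\delta_1}$; factoring out $2^{-\rho n}$ with $\rho=\delta_1-\eta$ and summing over $n$ (with dominated convergence supplying the limit) produces $\eps_\zeta=\sum_n 2^{-\rho n}\min\{1,\mathsf{W}_\Lambda^n(\zeta)\}^{1-\theta}$. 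What the paper's version buys is an explicit, quantitative coefficient expressed in terms of the whole family $\{\mathsf{W}_\Lambda^n(\zeta)\}_n$ --- a modulus of decay rather than a bare limit --- at the cost of the interpolation bookkeeping; your version proves exactly the qualitative statement asserted in \eqref{e:eps} with less machinery. One cosmetic remark: your lower bound $[\mathbf z,\zeta]_\eta\ge 2^{-M(md+2\eta)}t^{-md}$ on $\mathbb B_M^m(\zeta)$ is slightly lossy (the sharp exponent is $md+\eta$), but since the constant is harmless for fixed $M$ this does not affect the argument.
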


\begin{proof}
By symmetry, we can assume $\sigma$ is the identity element and $j=1$. Furthermore, set $\vec \psi=\vec \psi_1$ to declutter the notation. For each $ n \in \N$, we will construct $ \left\{\eps_{\zeta}^{n} \in \mathbb C : \zeta \in Z^d\right\} $ and $\rho>0$ with the properties that\[ \begin{aligned}
	& \sup_{\zeta \in Z^d} |\eps_{\zeta}^n| \lesssim 1, \quad \lim_{\zeta \to \infty} |\eps_\zeta^n|=0, \\ 
	&\left|\Lambda  \left( \vec\psi_{\mathbf{z}}, \phi_{\zeta}\right ) \right| \lesssim 2^{-\rho n} \left|\eps^{n}_{\zeta}\right| \left[ \mathbf{z},\zeta \right]_{\eta},  \quad \mathbf{z} \in \left\{ \begin{array}{ll} \mathbb{B}_{1}^m(\zeta)  & n=1, \\ \mathbb{B}_{n}^m(\zeta) \setminus \mathbb{B}_{{n-1}}^m(\zeta) & n \ge 2,\end{array}  \right.  \quad \mathbf{z} \ge \zeta. \end{aligned} 
	   \]
Assuming we have such $\ep_\zeta^n$, define
\[ \eps_{\zeta} = \sum_{n =1}^\infty 2^{-\rho n} \eps^n_{\zeta}.   \]
The first and third properties of $\eps_\zeta$ in \eqref{e:eps} are immediate and the second follows by Lebesgue's dominated convergence theorem since each $\ep^n_\zeta$ approaches zero. Now, to construct $\ep_\zeta^n$, let $\delta_j >0$ such that $\eta<\delta_1 <\delta_2<\delta$. By Lemma \ref{l:cz} and the definition of $\mathsf{W}_\Lambda^M$ from \eqref{e:WM}, we have for any $\theta \in (0,1)$,
	 \[ \left|\Lambda \left( \vec\psi_{\mathbf{z}},\phi_{\zeta} \right) \right| \lesssim \left(t^{-md} \right)^{1-\theta} \left [\mathbf{z},\zeta \right ]_{\delta_2}^{\theta} \min\{1, \mathsf{W}_\Lambda^{n}(\zeta)\}^{1-\theta},\quad \mathbf{z} \in \mathbb B^m_{n}(\zeta), \ \mathbf{z} \ge \zeta.\]
We choose $\theta$ close enough to $1$ that $md \theta+\delta_2 \theta -md=\delta_1$.
With this specific choice of $\theta$, one can easily verify that $ \left( t^{-md} \right)^{1-\theta}[\mathbf{z},\zeta ]_{\delta_2}^{\theta}= [\mathbf{z},\zeta ]_{\delta_1}$. Setting $\eps_\zeta^1 = \min\{1,\mathsf{W}^1_\Lambda(\zeta)\}^{1-\theta}$ handles the case $n=1$. For $n \ge 2$, we factor, with $\rho=\delta_1-\eta$,
\[ 	\left [\mathbf{z},\zeta \right ]_{\delta_1}= \left(\frac{t}{\max\left\{s,|\mathbf{w}-w_0| \right\} } \right)^{\rho}  \left [\mathbf{z},\zeta \right ]_{\eta}.\]
Therefore, one only needs to verify that for $\mathbf{z} \in \mathbb B^m_{n}(\zeta) \setminus \mathbb B^m_{{n-1}}(\zeta)$, the first factor is comparable to $2^{-\rho n}$. The proof is concluded by setting $\ep^n_\zeta = \min\{1,\mathsf{W}^n_\Lambda(\zeta)\}^{1-\theta}$.
\end{proof}

\begin{proposition}\label{p:rep} Every compact cancellative CZ form is a finite sum of compact wavelet forms.
\end{proposition}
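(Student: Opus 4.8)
The plan is to obtain the representation from the smooth wavelet expansion of \cites{DGW,DWW}, the only new feature being that the coefficients produced this way vanish at infinity, which is exactly the content of Proposition \ref{coeffdecaycondition}. First I would insert the Calder\'on reproducing formula \eqref{e:calderon} into each of the $m+1$ slots of $\Lambda(f_0,\ldots,f_m)$ and partition the resulting parameter integral according to which variable carries the finest scale; by the symmetry recorded in Remark \ref{r:cz} it suffices to treat, for each orientation $\sigma\in\mathrm{S}_{m+1}$, the region in which the output variable is finest. There one retains the fine pairing $\ip{f_0}{\phi_\zeta}$ at scale $\zeta=(w_0,t)$ and reconstructs the coarse parts (scales $\ge t$) of $f_1,\ldots,f_m$ by means of Lemma \ref{l:calderon}, which collapses the multi-scale tails to a single scale $s\ge t$ at the cost of the auxiliary wavelets $\psi^1,\ldots,\psi^4$. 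Telescoping this synchronization through the $m$ coarse slots produces, for finitely many telescoping indices $j$, precisely the tensor profiles $\vec\psi_j$ of \eqref{e:psi-e}, exhibiting $\Lambda$ as a finite sum of terms of the shape
\[ \int_{Z^d}\ip{f_0}{\phi_\zeta}\int_{\substack{\mathbf z\in Z^{md}\\ \mathbf z\ge\zeta}}\Lambda^\sigma\!\left((\vec\psi_j)_{\mathbf z},\phi_\zeta\right)\ip{f_1\otimes\cdots\otimes f_m}{(\vec\psi_j)_{\mathbf z}}\,\d\mu(\mathbf z)\,\d\mu(\zeta). \]

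Now I would invoke Proposition \ref{coeffdecaycondition}. With $\eta\in(0,\delta)$ fixed it furnishes scalars $\eps_\zeta$ satisfying $\sup_\zeta\abs{\eps_\zeta}\le C$, $\lim_{\zeta\to\infty}\abs{\eps_\zeta}=0$, and $\abs{\Lambda^\sigma((\vec\psi_j)_{\mathbf z},\phi_\zeta)}\le\abs{\eps_\zeta}[\mathbf z,\zeta]_\eta$ for $\mathbf z\ge\zeta$. Hence $H(\mathbf z,\zeta):=\eps_\zeta^{-1}\Lambda^\sigma((\vec\psi_j)_{\mathbf z},\phi_\zeta)$ (set to $0$ where $\eps_\zeta=0$) obeys $\abs{H(\mathbf z,\zeta)}\le[\mathbf z,\zeta]_\eta$, so that for any $\eta'\in(0,\eta)$ the averaging Lemma \ref{l:avg}, applied with the (mean-zero) factors of $\vec\psi_j$ as generating profiles, yields
\[ \nu_\zeta=\int_{\substack{\mathbf z\in Z^{md}\\ \mathbf z\ge\zeta}}H(\mathbf z,\zeta)\,(\vec\psi_j)_{\mathbf z}\,\d\mu(\mathbf z)\in C\,\Psi^{m,\eta';1}_\zeta. \]
Since every factor of $\vec\psi_j$ lies in $\Schw^0$, the output $\nu_\zeta$ is cancellative in the first input (indeed in all of them), as Definition \ref{d:wave} demands. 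Because $\eps_\zeta H(\mathbf z,\zeta)=\Lambda^\sigma((\vec\psi_j)_{\mathbf z},\phi_\zeta)$ even where $\eps_\zeta=0$, the displayed term equals $\int_{Z^d}\eps_\zeta\ip{f_0}{\phi_\zeta}\ip{f_1\otimes\cdots\otimes f_m}{\nu_\zeta}\,\d\mu(\zeta)$, a canonical wavelet form after absorbing $C$ into the coefficient; applying $\sigma$ renders the general-orientation terms wavelet forms in the sense of Definition \ref{d:wave}. Because $\sup_\zeta\abs{\eps_\zeta}<\infty$ while $\eps_\zeta\to0$, each of the finitely many summands is a \emph{compact} wavelet form.

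The principal obstacle lies in the first step: verifying that the reorganized Calder\'on expansion converges weakly against Schwartz tuples and reassembles to $\Lambda$ with no residual term, and that the finest-scale splitting followed by the synchronization of Lemma \ref{l:calderon} produces exactly the profiles $\vec\psi_j$ and the integration region $\mathbf z\ge\zeta$ required by Lemma \ref{l:avg}. This bookkeeping is the technical heart of the smooth wavelet representation and is carried out in \cites{DGW,DWW}; here the cancellativity hypothesis $\mathfrak b_j=0$ is precisely what guarantees that no paraproduct remainder survives, so that only wavelet forms occur. Once the representation is in place, the passage from bounded to compact wavelet forms is immediate, being encoded entirely in the decay $\eps_\zeta\to0$ supplied by Proposition \ref{coeffdecaycondition}.
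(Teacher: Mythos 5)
Your proposal follows essentially the same route as the paper: Calder\'on expansion in every slot, a splitting by the ordering of scales, collapse of the coarse scales via Lemma \ref{l:calderon}, and then Proposition \ref{coeffdecaycondition} combined with Lemma \ref{l:avg} to realize each piece as $\eps_\zeta\nu_\zeta$ with $\eps_\zeta\to0$. One small inaccuracy: Lemma \ref{l:calderon} produces \emph{two different} profiles per collapsed slot, so the coefficient $\Lambda^\sigma(\cdot,\phi_\zeta)$ carries the mean-zero Schwartz functions $\psi^2,\psi^4$ while the pairing with $f_1\otimes\cdots\otimes f_m$ carries $\psi^1,\psi^3\in\C^k_0$, which are \emph{not} mean zero; hence $\nu_\zeta$ is cancellative only in the slot occupied by the retained mother wavelet $\phi$ (the second-finest variable), not ``in all of them'' as you assert -- which is still enough for Definition \ref{d:wave} after a permutation, so the argument survives.
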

\begin{proof}
	We recycle the proof of the representation theorem in \cite{DGW} relying on Lemma \ref{l:calderon} and Lemma \ref{l:avg}. {Let $f_j \in \Schw(\mathbb R^d)$, $j=0,1,\ldots,m$ and expand each $f_j$ using \eqref{e:calderon} to obtain}
	\[ \Lambda(f_0,\ldots,f_m) = \int_{(Z^d)^{m+1}} \Lambda(\phi_{z_0}, \ldots,\phi_{z_m} ) \ip{f_0}{\phi_{z_0}} \, \d\mu(z_0) \ldots \ip{f_m}{\phi_{z_m}} \, \d\mu(z_m).\]
Split the integration region into $m(m+1)$ components  defined by
	\[ \{ (z_0,\ldots,z_m) \in (Z^d)^{m+1} : z_i \ge z_j \ge z_k, \, i \ne j,k\}, \quad  k=0,\ldots,m, \, j=0,\ldots,k-1,k+1,\ldots,m.\]
For each $j,k$ and  each $i \ne j,k$, we apply Lemma \ref{l:calderon} to $\ip{f_i}{\phi_{z_i}} \phi_{z_i}$. Furthermore, setting $f^\sigma = \bigotimes_{j=0}^m f_{\sigma(j)}$ and relabelling the variables, we obtain
	\[ \Lambda(f_0,\ldots,f_m) = \sum \int_{\zeta \in Z^d} \int_{\substack{\mathbf{z} \in Z^{md} \\ \mathbf{z} \ge \zeta}} \Lambda^\sigma( (\vec \psi_{\mathrm{e}})_{\mathbf{z}},\phi_\zeta) \ip{f^\sigma}{(\vec\psi_{\mathrm{o}})_{\mathbf{z}} \otimes \phi_\zeta } \d \mu(\mathbf{z}) \, \d\mu(\zeta), \]
where the sum is taken over all $\sigma \in \mathrm{S}_{m+1}$ and over the combinations $\vec\psi_{\mathrm{e}}$ of the form \eqref{e:psi-e} for some $j=1,\ldots,m$ and $\vec\psi_{\mathrm o}$ is of the same form but with $\psi^2$ and $\psi^4$ replaced by $\psi^1$ and $\psi^3$ from Lemma \ref{l:calderon}. Each of these summands (of which there are only finitely many depending on $m$), will now be converted to a compact wavelet form by Proposition \ref{coeffdecaycondition} and Lemma \ref{l:avg}. Fix now one $\sigma$, $\vec\psi_{\mathrm e}$, and $\vec\psi_{\mathrm{o}}$. We define
	\[ \varu_{\zeta} =  \int_{\substack{\mathbf{z} \in Z^{md} \\ \mathbf{z} \ge \zeta}} \Lambda^\sigma( (\vec \psi_{\mathrm{e}})_{\mathbf{z}},\phi_\zeta)  ( \vec\psi_{\mathrm{o}} )_{\mathbf z} \, d\mu(\mathbf{z})\]
and the result will be proved if we can show $\varu_\zeta = \eps_\zeta \nu_\zeta$ for some $\nu_\zeta \in \Psi^{m,\delta;m}_\zeta$ and $\ep_\zeta$ approaching zero as $\zeta \to \infty$. Let $\ep_\zeta$ be the collection provided by Proposition \ref{coeffdecaycondition}. In particular, the third property in \eqref{e:eps} guarantees
	\[ \Lambda^\sigma( (\vec \psi_{\mathrm{e}})_{\mathbf{z}},\phi_\zeta) = \eps_\zeta H(\mathbf{z},\zeta), \quad \abs{H(\mathbf{z},\zeta)} \le [\mathbf{z},\zeta]_\eta,\]
whence the proof is concluded by Lemma \ref{l:avg} and recalling that the $m$-th component of $\vec\psi_{\mathrm{o}}$ is the mother wavelet and thus belongs to $\Schw^0(\mathbb R^d)$.
\end{proof}

\section{Compact wavelet forms} \label{s:compact}
In this section we will prove the compact analogue of Proposition \ref{p:wave-bound}. 
\begin{proposition}\label{p:wave-compact} Let $T$ be such that $U(f_0,\ldots,f_m) = \ip{f_0}{T(f_1,\ldots,f_m)}$ is a compact wavelet form. Then $T$ is a compact CZO.
\end{proposition}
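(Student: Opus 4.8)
The plan is to combine the boundedness already recorded in Proposition \ref{p:wave-bound} with a compactness criterion in the spirit of Riesz--Kolmogorov, realized through an Arzel\`a--Ascoli argument, exploiting $\eps_z \to 0$ to discard the part of the form living at the ``boundary'' of parameter space. First I would reduce to a canonical form: since $U^\sigma$ is canonical for some $\sigma \in \mathrm S_{m+1}$, the operator $T$ is, up to a permutation of its inputs (which manifestly preserves Definition \ref{d:compact}, as $Q$ and $\mathcal B^{\vec p}$ are permutation-closed), one of the adjoints $V^{*,k}$, $k=0,1,\ldots,m$, of a canonical wavelet form $V$ with coefficients $\{\eps_z\}$ and $\nu_z \in \Psi^{m,\delta;1}_z$; so it suffices to show each such $V^{*,k}$ is a compact CZO. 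Fixing $M \ge 1$, I split $V^{*,k} = S_M + R_M$ according to whether $z \in \mathbb B_M$ or not. The remainder $R_M$ is the operator of a bounded wavelet form with coefficients $\eps_z \mathbf 1_{\mathbb B_M^{c}}(z)$, whose supremum $\eta_M := \sup_{z \notin \mathbb B_M}\abs{\eps_z}$ tends to $0$ by the compact-wavelet-form hypothesis and \eqref{e:limZ}. Hence Proposition \ref{p:wave-bound} gives $\norm{R_M}_{\vec p,r} \le C_{\vec p,r}\eta_M$ for every $(\vec p,r) \in Q$ and $\norm{\overline{R_M}}_{\infty,\mathrm{BMO}} \le C_\infty \eta_M$. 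Consequently $V^{*,k}(\mathcal B^{\vec p})$ lies within distance $C_{\vec p,r}\eta_M$ of $S_M(\mathcal B^{\vec p})$, so it suffices to prove that $S_M(\mathcal B^{\vec p})$ is precompact for each fixed $M$; total boundedness of $V^{*,k}(\mathcal B^{\vec p})$ then follows by letting $M \to \infty$, an argument valid also in the quasi-Banach metric $d(f,g)=\norm{f-g}_{L^r}^r$ via its $r$-subadditivity.

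The heart of the matter is the precompactness of $S_M(\mathcal B^{\vec p})$, which I would establish by Arzel\`a--Ascoli rather than a direct $L^r$ bound, since $S_M$ integrates only over the compact region $\mathbb B_M$, where $t \in [2^{-M},2^M]$, $\abs{w}\le 2^M$, and $\mu(\mathbb B_M)<\infty$. When the output slot is the $0$th one, the building block is the fixed, radial, compactly supported mother wavelet $\phi_z$, so every $S_M(f)$ is supported in one ball $B(0,R_M)$; using the size and smoothness bounds \eqref{e:chi} for $\nu_z$ with H\"older's inequality one gets the uniform estimate $\abs{\ip{f_1 \otimes \cdots \otimes f_m}{\nu_z}} \lesssim t^{-d/r(\vec p)}$ over $\mathcal B^{\vec p}$, whence $\{S_M(f)\}$ is uniformly bounded and uniformly Lipschitz on $B(0,R_M)$. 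Arzel\`a--Ascoli makes $S_M(\mathcal B^{\vec p})$ precompact in $\C^0(\overline{B(0,R_M)})$, and since these functions share a fixed compact support this upgrades to precompactness in $L^r$ for every $r>0$ --- this is exactly how the quasi-Banach range $r<1$, where Minkowski's integral inequality fails, is circumvented. For an adjoint slot $k\ge 1$ the output block is instead a partial integral of $\nu_z$ against the remaining inputs; by \eqref{e:chi} it is only $\C^\delta$ and decays polynomially rather than having compact support. Here I would run the same scheme, using uniform local equicontinuity (modulus $\abs{h}^\delta$, uniform over $\mathcal B^{\vec p}$ and $z\in\mathbb B_M$) to get precompactness for locally uniform convergence, and a uniform tail bound $\abs{S_M(g)(y)}\lesssim_M \abs{y}^{-b}$ with $b=d+\delta+(m-1)d\sigma$ (where $\sigma$ is the sum of the reciprocal input exponents feeding the output factor) to get $L^r$-tightness; combining locally uniform convergence with uniform tightness yields $L^r$-precompactness even for $r<1$.

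For the upper endpoint I would reuse the Arzel\`a--Ascoli input: $S_M(\mathcal B^\infty)$ is uniformly bounded, equicontinuous, and, from the decay of the building block, vanishes at infinity uniformly, hence is precompact in $\C_{\mathrm v}(\mathbb R^d)$ for the uniform norm. The elementary bound $\norm{g}_{\mathrm{BMO}} \le 2\norm{g}_{L^\infty}$ transfers this to precompactness in $\CMO$; and since each $S_M(g)\in \C_{\mathrm v}(\mathbb R^d)\subset \CMO$ while $\norm{\overline{R_M}}_{\infty,\mathrm{BMO}}\to 0$ and $\CMO$ is closed in $\BMO$, the extension $\bar{\bar T}(\mathcal B^\infty)$ both lies in $\CMO$ and is totally bounded there.

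The main obstacle I anticipate is the adjoint case $k\ge 1$: unlike the $0$th slot, the output is no longer the fixed compactly supported mother wavelet but a wavelet-class function, so both equicontinuity and tightness must be wrung out of the estimates \eqref{e:chi} uniformly over the unit ball $\mathcal B^{\vec p}$ and across the whole range of H\"older tuples, including when some $p_i=\infty$. The delicate numerology is the tail exponent: one must verify $br>d$ throughout the quasi-Banach range, which hinges on $q_0>1$ forcing $\sigma>\tfrac1r-1$, so that small $r$ is always accompanied by sufficiently integrable inputs. Establishing this uniform tail decay, rather than the routine Banach-range estimate where Minkowski applies, is where the real work lies.
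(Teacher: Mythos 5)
Your proposal is correct and follows essentially the same route as the paper: the same splitting of the integral over $\mathbb B_M$ versus its complement (with Proposition \ref{p:wave-bound} absorbing the far part since $\sup_{z\notin\mathbb B_M}\abs{\eps_z}\to 0$), and the same pointwise size and H\"older-modulus bounds on the local piece, which the paper feeds into the Riesz--Kolmogorov criterion for the $L^r$ range and an Arzel\`a--Ascoli argument at the $\CMO$ endpoint exactly as you do. The only discrepancy is minor: your heuristic tail exponent $b=d+\delta+(m-1)d\sigma$ should read $d+\delta+d\sigma$, but the required inequality $br>d$ still holds (it is the paper's $\lambda_0 r=d+\eta r>d$ from \eqref{e:prelim}), so nothing in the argument is affected.
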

\begin{proof} $T(f_1,\ldots,f_m)(x)$ is either of the form
	\begin{equation}\label{e:Topt}  \int_{Z^d}\eps_z \ip{f_1 \otimes \cdots \otimes  f_m}{\nu_z} \phi_z(x) \, \d\mu(z) \quad \mathrm{or} \, \int_{Z^d} \eps_z \ip{f_{\sigma(1)} \otimes \cdots \otimes f_{\sigma(m)} }{\nu_z(x,\cdot) \otimes \phi_z}\, \d \mu(z)\end{equation}
for some $\sigma \in \mathrm{S}_{m}$ and $\nu_z \in \Psi^{m,\delta;j}_z$; by $\nu_z(x,\cdot)$ we mean for each $x$ it returns the function $(x_2,\ldots,x_m) \mapsto \nu_z(x,x_2,\ldots,x_m)$. We will only handle the second case, and we will reduce, by symmetry, to the case where $\sigma$ is the identity. The first case in \eqref{e:Topt} is simpler, though in fact they are handled in exactly the same fashion. Let $\rho>0$ and split the integral defining $T$ over $\mathbb B_M$ and $Z^d \setminus \mathbb B_M$ where $M$ is chosen large enough that $|\ep_z| \le \rho$ for $z \not\in \mathbb B_M$. Therefore, the operator norm of the second component is, by \eqref{e:wave-bound} controlled by $\rho$. The proof will be concluded if we can show $R_\rho$ defined by
	\[ R_\rho \mathbf f (x) = \int_{\mathbb B_M} \ip{f_{1} \otimes \cdots \otimes f_{m} }{\nu_z(x,\cdot) \otimes \phi_z} \, \d\mu(z), \quad \mathbf {f} = (f_1,\ldots,f_m),\]
is compact. By the Riesz-Kolmogorov compactness criteria (see e.g. \cite{MSWW} and \cite{Tsuji} for the case $0<r<1$), we need to prove that 
\begin{align}
	\label{e:RK1}
		& \lim_{N \to \infty}\sup_{\mathbf{f} \in \mathcal{B}^{\vec p}} \int_{|x|>N} |R_{\rho} \mathbf{f}(x)|^r \d x=0, \\ 
	\label{e:RK2}
		& \lim_{h \to 0} \sup_{\mathbf{f} \in \mathcal{B}^{\vec p}} \int_{\R^d} |R_{\rho}\mathbf{f}(x+h)-R_{\rho}\mathbf{f}(x)|^r \d x=0.
\end{align}To this end we will give suitable pointwise bounds on the operator $ R_{\rho}$ and the differences induced by it. Since $M$ is fixed, we will crucially use that $z=(w,t) \in \mathbb B_M$ satisfies
	\begin{equation}\label{e:comp} t \sim 1, \quad |w| \lesssim 1, \quad \chi_z \lesssim \chi,\end{equation} 
where $\chi=\chi_{(0,1)}$ while $\sim$ and $\lesssim$ now denote comparability with constants depending on $M$. Let us now note the preliminary trivial bounds that can be obtained via applying H\"{o}lder's inequality and \eqref{e:comp}. To this end, introduce, for $j=1,\ldots,m-1$,
	\[\lambda_0 = \frac{d}{r} + \eta, \quad \lambda_j = \frac{d}{p_j'} + \eta, \quad \eta = \frac{1}{m}\left( \frac{d}{p_m'}+\delta\right) >0.\]
It is easy to check that $md+\delta=\sum_{j=0}^{m-1} \lambda_j$, $\lambda_0r > d$, and $\lambda_j p_j' > d$, thus for $z \in \mathbb B_M$, 
	\begin{equation}\label{e:prelim} \begin{aligned}
			&\left|\langle f_{m},\phi_z \rangle \right| \leq \left \| f_m \right \|_{p_m} \left \| \phi_z \right \|_{p_m'} \lesssim 1 \\ 
		& \left|\langle v_{z}(x,\cdot),f_1 \otimes \ldots \otimes f_{m-1} \rangle\right| \leq \frac{1}{t^d} \chi_{z}(x)^{\lambda_0}  \prod_{j=1}^{m-1} \left \| f_j \right \|_{p_j} \left \| \frac{1}{t^d} \chi_{z}^{\lambda_j} \right \|_{p_j'} \lesssim \chi(x)^{\lambda_0}.
		\end{aligned} \end{equation}
With these estimates in hand we have that $|R_\rho \mathbf{f}| \lesssim \chi^{\lambda_0}$ and therefore \eqref{e:RK1} holds since $\lambda_0 r>d$. In the same way, we have that
	\[ \left|\langle v_{z}(x,\cdot),f_1 \otimes \ldots \otimes f_{m-1} \rangle -\langle v_{z}(x+h,\cdot),f_1 \otimes \ldots \otimes f_{m-1} \rangle\right| \lesssim |h|^\delta \chi(x)^{\lambda_0} \]
and therefore $|R_\rho \mathbf{f} (x)-R_\rho \mathbf{f}(x-h)| \lesssim |h|^\delta \chi(x)^{\lambda_0}$ from which \eqref{e:RK2} now follows. It remains to handle the endpoint case. {First note that $R_\rho$ is clearly bounded on $\mathcal B^\infty$.} We claim it suffices to establish that for each sequence $\{\mathbf{f}_n\}_{n \in \N} \subset \B^\infty$, $R_\rho \mathbf{f}_n$ has a convergent subsequence in $\CMO$. Indeed, given such an $\{\mathbf{f}_n\}_{n \in \N}$, by a diagonalization argument, we may extract a subsequence $\{\mathbf{f}_{n_k}\}_{k \in \N}$ such that for each $\rho_n = \frac 1n$, $\{ R_{\rho_n} \mathbf{f}_{n_k}\}_{k \in \N}$ is Cauchy in $\CMO$. Then, for any $\epsilon>0$ pick $n$ large enough that
	\begin{equation}\label{e:TRep} \norm{{   (T-R_{\rho_n})[\cdot ]}}_{\infty,\mathrm{BMO}} < \frac{\epsilon}{3}, \end{equation}
recalling the definition \eqref{e:Linfty}.
Such an $n$ exists by the condition $\ep_z \to 0$ and \eqref{e:wave-bound-endpoint}. Then, pick $N$ large enough that for all $i,k  \ge N$, $\norm{R_{\rho_n}\mathbf{f}_{n_i}-R_{\rho_n}\mathbf{f}_{n_k}}_{\BMO} < \frac{\epsilon}{3}$. Therefore, by the triangle inequality, {\eqref{e:TRep}, and recalling that $R_\rho[\mathbf{f_n}]$ and $R_\rho(\mathbf{f}_n)$ only differ by a constant,}
	\[ \norm{T [\mathbf{f}_{n_k}]-T[\mathbf{f}_{n_i}]}_{\BMO} < \epsilon \]
whence $\{T[\mathbf{f}_{n_k}] \}_{k \in \N}$ is Cauchy and has a limit in $\CMO$. Now it remains to show each $R_\rho$ is compact. Applying \eqref{e:prelim} with all $p_j=\infty$ implies the same pointwise estimates as above, which implies $R_\rho:\B^\infty \to \C_{\mathrm{v}}(\mathbb R^d)$ and that the family $\{R_\rho \mathbf{f} : \mathbf{f} \in \B^\infty\}$ is equicontinuous. Therefore, by the Arzela-Ascoli theorem and a diagonalization argument, given a sequence $\{\mathbf{f}_n\}_{n \in \N} \in \B^\infty$, we can obtain a subsequence such that $\{R_\rho \mathbf{f}_{n_k}\}_{k \in\N}$ is Cauchy in $\norm{\cdot}_{L^\infty([-n,n]^d)}$ for each $n \in \N$. However, the pointwise estimate for $R_\rho \mathbf{f}$ shows that given $\epsilon>0$ we can find $n$ large enough that $\abs{R_\rho \mathbf{f}_{n_k}(x)} < \frac{\epsilon}{3}$ for $x$ outside $[-n,n]^d$. Combining these two facts with the triangle inequality shows that $\{R_\rho \mathbf{f}_{n_k}\}_{k \in \N}$ is Cauchy in $\norm{\cdot}_{L^\infty}$ which is a stronger norm than $\norm{\cdot}_{\BMO}$. Finally, recalling that $R_\rho \mathbf{f}_{n_k} \in \C_{\mathrm{v}}(\R^d)$ establishes that the limit belongs to $\CMO$.
\end{proof}

\section{Compactness of paraproducts}\label{s:pp} In this section we will deal with the compactness of the paraproducts that arise from our representation theorem. Specifically, we will prove that the membership of the symbols in $\CMO$ is sufficient for the compactness of the associated paraproduct by giving an essential norm estimate. Given $(\vec p,r) \in Q$, and an $m$-linear operator $T$, define the essential norm
	\[ \left \| T \right \|_{\mathrm{ess}(\vec p,r)}  = \inf_{K \, \mathrm{compact}}
	\left \| T-K\right \|_{\vec p,r},\] 
and the natural modification at the endpoint which we denote by $\norm{\cdot}_{\mathrm{ess}(\infty,\mathrm{BMO})}$ where the operator norm of $T-K$ is measured in $\norm{\cdot}_{\infty,\mathrm{BMO}}$.
\begin{definition}\label{d:pp}
Let $\varu \in \C^\infty_0(B(0,1))$ with $\int \varu(x) \, \d x =1$. For each $z \in Z^d$, set $\varu_z = \Sy_z \varu$. Given $\bb \in \BMO$, define the $(m+1)$-linear form $\Pi_{\bb}$ by
	\[ \Pi_{\bb}(f_0,f_1,\ldots,f_m) = \int_{Z^d} \ip{\bb}{\phi_z}\ip{f_0}{\phi_z} \prod_{j=1}^m \ip{f_j}{\varu_z} \, \d \mu(z).\]
$\Pi_b$ is called a paraproduct form with symbol $\bb$. Any $m$-linear operator $S_{\mathfrak b}$ such that
	\[ \ip{f_0}{S_{\mathfrak b}(f_1,\ldots,f_m)} = \Pi_{\mathfrak b}^\sigma(f_0,\ldots,f_m)\]
for some $\sigma \in \mathrm{S}_{m+1}$ is called a paraproduct with symbol $\bb$.
\end{definition}

The main goal of this section is to prove the following.
\begin{proposition}\label{p:ppc}
If $\bb \in \CMO$, then any paraproduct $S_\bb$ is a compact CZO.
\end{proposition}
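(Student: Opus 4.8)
The plan is to mirror the proof of Proposition \ref{p:wave-compact}, with the coefficient $\ip{\mathfrak b}{\phi_z}$ playing the role of $\ep_z$, but with the essential difference that the inputs paired with $\varu_z$ carry no cancellation. By the symmetry built into Definition \ref{d:pp}, it suffices to treat the two model operators that arise according to whether $f_0$ is paired with the cancellative bump $\phi_z$ or with $\varu_z$; in the latter case exactly one input, say $f_{j_0}$, is paired with $\phi_z$. In either case I would split $S_\mathfrak b = S_\mathfrak b^M + (S_\mathfrak b - S_\mathfrak b^M)$, where $S_\mathfrak b^M$ restricts the defining integral to the fixed hyperbolic ball $\mathbb B_M$. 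The goal is the essential-norm bound $\norm{S_\mathfrak b}_{\mathrm{ess}(\vec p,r)} \le \norm{S_\mathfrak b - S_\mathfrak b^M}_{\vec p,r}$, obtained by showing $S_\mathfrak b^M$ is compact and then letting $M \to \infty$ on the right.

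For the truncated piece $S_\mathfrak b^M$, the argument is essentially identical to Proposition \ref{p:wave-compact}: on $\mathbb B_M$ one has \eqref{e:comp}, namely $t \sim 1$, $|w| \lesssim 1$, and $\chi_z \lesssim \chi$, so H\"older's inequality yields a pointwise bound $|S_\mathfrak b^M \mathbf f(x)| \lesssim \chi(x)^{\lambda}$ together with a H\"older-continuity estimate $|S_\mathfrak b^M \mathbf f(x+h) - S_\mathfrak b^M \mathbf f(x)| \lesssim |h|^\delta \chi(x)^{\lambda}$, using the smoothness and decay of $\phi$ and $\varu$. These give tightness \eqref{e:RK1} and equicontinuity \eqref{e:RK2}, so the Riesz--Kolmogorov criterion applies in the reflexive and quasi-Banach ranges, while at the endpoint Arzel\`a--Ascoli gives precompactness in $\C_{\mathrm v}(\mathbb R^d) \subset \CMO$, exactly as before.

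The main work, and the step I expect to be the genuine obstacle, is showing $\norm{S_\mathfrak b - S_\mathfrak b^M}_{\vec p,r} \to 0$ as $M \to \infty$. Here $S_\mathfrak b - S_\mathfrak b^M$ is a truncated paraproduct whose coefficients $\ip{\mathfrak b}{\phi_z}$ are supported on the tail $Z^d \setminus \mathbb B_M$. The operator norm of a paraproduct is controlled, via the Carleson embedding theorem, by the square root of the Carleson constant of the measure $\abs{\ip{\mathfrak b}{\phi_z}}^2 \, \d\mu(z)$, so it suffices to prove that $\mathfrak b \in \CMO$ forces this measure to be a \emph{vanishing} Carleson measure on the tail:
	\[ \lim_{M \to \infty} \ \sup_{Q \, \mathrm{cube}} \ \frac{1}{|Q|} \int_{\widehat Q \setminus \mathbb B_M} \abs{\ip{\mathfrak b}{\phi_z}}^2 \, \d\mu(z) = 0, \]
where $\widehat Q$ denotes the Carleson box over $Q$ in $Z^d$. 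This is precisely the Uchiyama-type characterization of $\CMO$ as the class of $\BMO$ functions whose Carleson measure vanishes at small scales, at large scales, and at spatial infinity --- the three regimes that together comprise $Z^d \setminus \mathbb B_M$.

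I would establish the displayed vanishing condition first for $\mathfrak b \in \C_{\mathrm v}(\mathbb R^d)$ by direct estimation of $\ip{\mathfrak b}{\phi_z}$ in each of the three regimes, and then pass to general $\mathfrak b \in \CMO$ by the density of $\C_{\mathrm v}$ together with the uniform control $\norm{\abs{\ip{\mathfrak b}{\phi_z}}^2 \, \d\mu}_{\mathrm{Carleson}} \lesssim \norm{\mathfrak b}_{\BMO}^2$. The endpoint bound $\norm{S_\mathfrak b - S_\mathfrak b^M}_{\infty,\BMO} \to 0$ follows from the corresponding paraproduct estimate into $\BMO$, and combining the vanishing tail norm with the compactness of $S_\mathfrak b^M$ yields $\norm{S_\mathfrak b}_{\mathrm{ess}} = 0$ in the full range, which is the desired compactness.
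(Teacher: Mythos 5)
Your overall strategy coincides with the paper's: split $S_{\mathfrak b}$ into a piece supported on a fixed hyperbolic ball $\mathbb B_M$, show that piece is compact by the same Riesz--Kolmogorov/Arzel\`a--Ascoli argument as in Proposition \ref{p:wave-compact}, and show the tail operator norm vanishes as $M \to \infty$. The paper carries out exactly this decomposition in Proposition \ref{paraprodess}. Where you diverge is the tail estimate. The paper never invokes Carleson embedding: it uses the discrete wavelet characterization \eqref{e:CMO-PM} of $\CMO$ (cited from Villarroya), splits $\ip{\mathfrak b}{\phi_z} = \ip{P_M\mathfrak b}{\phi_z} + \ip{P_M^\perp\mathfrak b}{\phi_z}$, gains a factor $M^{-1/4}$ on the first term by almost-orthogonality (since $\zeta(I) \in \mathbb B_M$ while $z \notin \mathbb B_{100M}$) together with Lemma \ref{l:avg}, and then bounds both terms by the black-box paraproduct estimate \eqref{e:pp-bound} applied to the \emph{genuine} $\BMO$ symbols $\mathfrak b$ and $P_M^\perp\mathfrak b$. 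This reduction to an honest symbol is what lets the paper cover the entire range $Q \cup \{(\infty,\mathrm{BMO})\}$, including the quasi-Banach targets $r<1$ and the $\BMO$ endpoint, with no new boundedness input.

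The step in your argument that needs more than you give it is the assertion that the operator norm of the tail paraproduct is controlled, in the full range, by the square root of the Carleson constant of $\mathbf 1_{Z^d\setminus\mathbb B_M}\abs{\ip{\mathfrak b}{\phi_z}}^2\,\d\mu(z)$. Carleson embedding delivers this directly only for the $L^2$-based exponents; for general $(\vec p, r) \in Q$ with $r<1$ and for the $(\infty,\mathrm{BMO})$ endpoint you would have to re-derive the analogue of \eqref{e:pp-bound} with the Carleson constant of a truncated coefficient family in place of $\norm{\mathfrak b}_{\BMO}$ --- the truncated coefficients are not the wavelet coefficients of any function you have named, so \eqref{e:pp-bound} does not apply as a black box, and the bounded-wavelet-form estimate \eqref{e:wave-bound} does not apply either because the paraproduct lacks cancellation in the $\varu_z$ slots. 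This is fillable (e.g.\ by sparse domination of paraproducts with Carleson coefficients, or by converting the vanishing tail Carleson measure back into a $\BMO$ function of small norm, which is essentially the paper's route), but as written it is a gap. A second, smaller point: your vanishing-tail-Carleson characterization of $\CMO$ should be run through the dense subclass $\C^\infty_0(\mathbb R^d)$ rather than $\C_{\mathrm v}(\mathbb R^d)$; for a mere $\C_{\mathrm v}$ function the pointwise bound $\sup_w \abs{\ip{\mathfrak b}{\phi_{(w,t)}}} \to 0$ as $t \to \infty$ can decay too slowly to be integrable against $\d t/t$ over $[2^M,\ell(Q)]$, so the large-scale regime of your three-regime estimate does not close by the naive supremum-times-logarithm bound, whereas for compactly supported smooth $\mathfrak b$ one has $\abs{\ip{\mathfrak b}{\phi_z}} \lesssim \min\{t, t^{-d}\}$ and all three regimes are immediate.
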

{Our proof of Proposition \ref{p:ppc} will argue along the same lines as our proof of Proposition \ref{p:wave-compact} from the previous section. For different line of reasoning, see \cite{BLOT24a}.

To prepare for the proof,} let us review the standard boundedness theory of paraproducts, analogous to Proposition \ref{p:wave-bound} for wavelet forms. To do so, it is convenient to view $\Pi_\bb$ as an $(m+2)$-linear form, where the extra input function is $\bb$ itself. In fact, in this way $\Pi_{\bb}(f_0,\ldots,f_m) = U(\bb,f_0,\ldots,f_m)$ where $U$ is a canonical $(m+2)$-linear wavelet form with $\abs{\ep_z}\lesssim 1$. Such forms are cancellative in the first and second positions, for which a slight strengthening of \eqref{e:wave-bound} and \eqref{e:wave-bound-endpoint} holds: {If $T$ satisfies
	\[ \ip{f_0}{T(f_1,\ldots,f_m)} = U(\bb,f_{\sigma(0)},\ldots,f_{\sigma(m)}) \]
for some $\sigma \in \mathrm{S}_{m+1}$ and canonical wavelet form $U$, then}
	\begin{equation}\label{e:pp-bound} \begin{aligned}  
	&\norm{ T }_{\vec p,r} \le C_{\vec p,r} \norm{\bb}_{\BMO}, \quad \norm{ T }_{\infty,\mathrm{BMO}} \le C_{\infty} \norm{\bb}_{\BMO}, \quad (\vec p, r) \in Q .\end{aligned} \end{equation}
In particular, \eqref{e:pp-bound} applies to $T = S_{\bb}$. 
A proof of \eqref{e:pp-bound} is omitted since it follows from standard considerations; see e.g. the proofs and comments following Propositions 2.5 and 2.7 in \cite{DWW}. 

Now, to give a description of $\CMO$ which is more amenable to $S_{\bb}$, let us introduce an orthonormal wavelet system $ \left\{\psi_I\right\}_{I \in \D}$. Here $\D$ is a dyadic grid on $\mathbb R^d$ and for each $I \in \D$, set $\zeta(I)=(c(I),\ell(I)) \in Z^d$, where $c(I)$ is center of the cube $I$ and $\ell(I)$ the side length. Then $\psi_I = \Sy_{\zeta(I)} \psi$ for a specific $\psi \in C\Psi^{1,1;1}_{(0,1)}$. We have kept $\psi_I$ to be $L^1$-normalized, so the reproducing formula is
	\[ f = \sum_{I \in \D} |I| \ip{f}{\psi_I} \psi_I.\]
Now introduce the family of orthogonal projections for $M \ge 1$,
\[P_Mf= \sum_{I \in \D_M}|I| \langle f,\psi_I \rangle \psi_I, \quad  
	\mathcal{D}_M \coloneqq \left\{ I \in \D :  \zeta(I) \in \mathbb B_M \right\}, \quad P_M^\perp = \mathrm{Id}-P_M .\]
From \cite{Vil15}*{Lemma 2.20}, an equivalent characterization of $\bb \in \CMO$ is that
	\begin{equation}\label{e:CMO-PM} \lim_{M \to \infty} \norm{P^\perp_M \bb}_{\BMO} =0.\end{equation}
Therefore, Proposition \ref{p:ppc} will be a consequence of the following essential norm estimate.

\begin{proposition} \label{paraprodess} Let $S_{\bb}$  be a paraproduct with symbol $\bb \in \BMO$. Then for each $(\vec p,r) \in Q \cup \{ (\infty,\mathrm{BMO})\}$, \begin{equation}\label{e:pp-ess} \left \| S_{\bb} \right \|_{\mathrm{ess}(\vec p,r)} \lesssim \liminf_{M\to \infty} \left \| P_{M}^{\perp} \bb \right \|_{\BMO}.\end{equation}\end{proposition}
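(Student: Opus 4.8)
The plan is to reduce, via the decomposition $\bb = P_M\bb + P_M^\perp\bb$, to the compactness of a paraproduct with a \emph{fixed nice} symbol, and to absorb the remainder into the essential norm using the boundedness estimate \eqref{e:pp-bound}. Since $\bb\mapsto S_\bb$ is linear in the symbol (Definition \ref{d:pp}), we have $S_\bb = S_{P_M\bb} + S_{P_M^\perp\bb}$, and \eqref{e:pp-bound} applied to the symbol $P_M^\perp\bb$ gives $\|S_{P_M^\perp\bb}\|_{\vec p,r}\lesssim\|P_M^\perp\bb\|_{\BMO}$ for every $(\vec p,r)\in Q\cup\{(\infty,\mathrm{BMO})\}$. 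Granting for the moment that $S_{P_M\bb}$ is compact for each fixed $M$, it is an admissible competitor in the infimum defining the essential norm, so
\[ \|S_\bb\|_{\mathrm{ess}(\vec p,r)}\le\|S_\bb - S_{P_M\bb}\|_{\vec p,r} = \|S_{P_M^\perp\bb}\|_{\vec p,r}\lesssim\|P_M^\perp\bb\|_{\BMO}, \]
and taking $\liminf_{M\to\infty}$ yields \eqref{e:pp-ess}.

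It then remains to show that $\varphi:=P_M\bb$ yields a compact paraproduct $S_\varphi$. The key point is that $\D_M = \{I\in\D:\zeta(I)\in\mathbb B_M\}$ is a \emph{finite} set, so $\varphi = \sum_{I\in\D_M}|I|\langle\bb,\psi_I\rangle\psi_I$ is a finite linear combination of the regular, rapidly decaying, mean-zero wavelets $\psi_I$; in particular $\varphi\in\C_{\mathrm{v}}(\mathbb R^d)\subset\CMO$ and has mean zero. By symmetry I would treat the two representative permutations, first the case where the output is paired with $\phi_z$,
\[ S_\varphi(f_1,\ldots,f_m)(x) = \int_{Z^d} g(z)\,\phi_z(x)\prod_{j=1}^m\langle f_j,\varu_z\rangle\,\d\mu(z), \quad g(z) = \langle\varphi,\phi_z\rangle, \]
and then the case where the output is paired with $\varu_z$ and a single input with $\phi_z$, which is handled identically. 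Because both $\varphi$ and $\phi_z$ carry mean zero, almost orthogonality (cf. \cite{DWW}*{Lemma 2.3}) shows that $g(z)$ decays rapidly in the hyperbolic metric, and in particular $\lim_{z\to\infty}g(z)=0$.

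With this decay in hand I would argue exactly as in the proof of Proposition \ref{p:wave-compact}, splitting the integral defining $S_\varphi$ over $\mathbb B_N$ and $Z^d\setminus\mathbb B_N$. On $\mathbb B_N$ the coefficient $g(z)$ is bounded and, by \eqref{e:comp}, the kernel is localized at unit scale ($t\sim 1$, $|w|\lesssim 1$, $\chi_z\lesssim\chi$); the resulting operator $R_N$ satisfies the same pointwise size and smoothness bounds obtained there, so the Riesz--Kolmogorov criterion (and Arzela--Ascoli at the endpoint $(\infty,\mathrm{BMO})$) gives that $R_N$ is compact for each $N$. The tail, over $Z^d\setminus\mathbb B_N$, is again a paraproduct, and its operator norm is controlled by \eqref{e:pp-bound} in its Carleson-measure form, i.e. by the Carleson norm of $|g(z)|^2\,\d\mu(z)$ restricted to $Z^d\setminus\mathbb B_N$. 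Since $\varphi\in\CMO$ this is a vanishing Carleson measure --- concretely, the rapid decay of $g$ forces the restricted Carleson norm to tend to $0$ as $N\to\infty$ --- so the tail norm tends to $0$. Hence $S_\varphi$ is an operator-norm limit of the compact operators $R_N$, and therefore compact.

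The hard part will be precisely this tail estimate, which is where paraproducts genuinely differ from the wavelet forms of Proposition \ref{p:wave-compact}. There a uniform bound on the coefficients $\eps_z$ sufficed to control the tail, because a wavelet form carries cancellation in two inputs. A paraproduct is cancellative in only one slot, so its boundedness is governed by a Carleson-measure condition rather than by a supremum of coefficients; controlling the tail thus requires upgrading the pointwise decay of $g(z)=\langle\varphi,\phi_z\rangle$ to the statement that a fixed mean-zero $\varphi\in\CMO$ produces a vanishing Carleson measure, which is the quantitative heart of the matter. Everything else parallels the wavelet-form argument. Finally, applied to Proposition \ref{p:ppc}: when $\bb\in\CMO$, the right-hand side of \eqref{e:pp-ess} vanishes by \eqref{e:CMO-PM}, so $\|S_\bb\|_{\mathrm{ess}}=0$ and $S_\bb$ is compact.
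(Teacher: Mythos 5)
Your outer reduction is sound and matches the paper's bookkeeping: linearity of $\bb \mapsto S_{\bb}$ in the symbol, the bound $\norm{S_{P_M^\perp \bb}}_{\vec p,r} \lesssim \norm{P_M^\perp \bb}_{\BMO}$ from \eqref{e:pp-bound}, and the observation that any compact competitor collapses the essential norm to the norm of the remainder. The genuinely different choice you make is to split the \emph{symbol} first and then prove outright that $S_{P_M\bb}$ is compact. The paper instead splits the \emph{integration region} first, takes the integral over $\mathbb B_{100M}$ as the compact competitor $R_M$ (compact by the same Riesz--Kolmogorov/Arzel\`a--Ascoli argument you cite), and only afterwards splits the symbol inside the tail $T_M$. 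This difference in the order of operations is exactly where your argument has a gap.

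The gap is the tail estimate for $S_{P_M\bb}$ over $Z^d\setminus\mathbb B_N$. You invoke ``\eqref{e:pp-bound} in its Carleson-measure form,'' but \eqref{e:pp-bound} as stated only controls such a form by the $\BMO$ norm of its full symbol; applied to the truncated form it returns $\norm{P_M\bb}_{\BMO} \lesssim \norm{\bb}_{\BMO}$, which does not vanish as $N \to \infty$. What you actually need is a bound for the truncated paraproduct by the Carleson norm of $\abs{\ip{P_M\bb}{\phi_z}}^2\,\d\mu(z)$ restricted to $Z^d\setminus\mathbb B_N$, valid uniformly over all of $Q\cup\{(\infty,\mathrm{BMO})\}$ (including the quasi-Banach range $r<1$ and the $\BMO$ endpoint), together with a proof that this restricted norm vanishes. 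Neither statement appears in the paper, and both are asserted rather than derived in your sketch --- you correctly flag this as ``the quantitative heart of the matter,'' but that is precisely the step that must be written down. The paper sidesteps it entirely: for $z\notin\mathbb B_{100M}$ it writes $\ip{P_M\bb}{\phi_z} = \bigl\langle \bb, \sum_{I\in\D_M}|I|\ip{\psi_I}{\phi_z}\psi_I\bigr\rangle$, uses the almost-orthogonality $\abs{\ip{\psi_I}{\phi_z}}\lesssim [z,\zeta(I)]_{1/2}\lesssim M^{-1/4}[z,\zeta(I)]_{1/4}$ (the gain coming from $\zeta(I)\in\mathbb B_M$ versus $z\notin\mathbb B_{100M}$), and then Lemma \ref{l:avg} to rewrite this term as $M^{-1/4}\ip{\bb}{\lambda_z}$ with $\lambda_z$ in a wavelet class, so that the plain estimate \eqref{e:pp-bound} applies with prefactor $M^{-1/4}$. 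Your argument can be repaired the same way: after fixing $\varphi=P_M\bb$ and taking $N\ge 100M$, the identical computation shows the tail is $N^{-1/4}$ times a bounded paraproduct-type form with symbol $\bb$, which lets you drop the Carleson-measure detour altogether.
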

	\begin{proof} Let $M \ge 1$ large, and perform the splitting $S_{\bb} = R_M + T_M$ where 
	\[ T_M (f_1,\ldots,f_m) = \int_{Z^d \setminus \mathbb  B_{100 M}} \ip{\bb}{\phi_z} \prod_{j=1}^m \ip{f_j}{\varu_z} \phi_z \, \d\mu(z),\]
and $R_M$ is same but the integration is taken over $\mathbb B_{100 M}$. The operator $R_{M}$ is clearly compact by repeating the discussion made in \S \ref{s:compact} to show that $R_\rho$ there was compact. Therefore the essential norm of $S_{\bb}$ is controlled by the operator norm of $T_M.$ Now,  for $z \not \in \mathbb{B}_{100 M},$ we can calculate the pairings appearing in the above equation by expanding $\bb$ with the aid of the wavelet basis $ \left\{\psi_I\right\}_{I \in \D}$: \[  \langle \bb, \phi_{z} \rangle = \langle P_M\bb, \phi_{z} \rangle+ \langle P_{M}^{\perp}\bb, \phi_{z} \rangle. \] For the first term we will employ the linear wavelet averaging process from Lemma \ref{l:avg}. We expand 
	\[ \langle P_M\bb, \phi_{z} \rangle= \sum_{I \in \D_M} |I| \langle \bb, \psi_I \rangle \langle \psi_I, \phi_{z} \rangle= \left  \langle \bb, \sum_{I \in \D_M}|I| \langle \psi_I, \phi_{z} \rangle \psi_I \right \rangle.  \] Since $\psi_I \in \Psi^{1,1;1}_{\zeta(I)}$ and $\phi_z \in \Psi^{1,1;1}_z$, one can compute $\left| \langle \psi_I, \phi_{z} \rangle\right| \lesssim [z,\zeta(I)]_{\frac 12}$ (see e.g. \cite{DWW}*{Lemma 2.3} or \cite{FJW}*{Appendix, Lemmata 2 and 4}). Furthermore, since $\zeta(I) \in \mathbb B_M$ and $z \not\in \mathbb B_{100 M}$, $[z,\zeta]_{\frac 12} \lesssim M^{-\frac 14} [z,\zeta]_{\frac 14}$. Now, to apply Lemma \ref{l:avg}, rewrite
	\[ \sum_{I \in \D_M}|I| \langle \psi_I, \phi_{z} \rangle \psi_I = \int_{Z^d} H(\zeta,z) \tilde \psi_\zeta \, \d\mu(\zeta), \quad \]
where for each $I \in \D_M$ and $\zeta = (w,t) \in I \times (\tfrac{\ell(I)}{2},\ell(I)]$, we define
	\[ \tilde \psi_\zeta = \psi_I, \quad H(\zeta,z) = \frac{|I|}{\mu(I \times (\tfrac{\ell(I)}{2},\ell(I)])} \langle \psi_I, \phi_{z} \rangle,\]
and $H(\zeta,z)=0$ if $\zeta \not \in \cup_{I \in \D_M} I \times (\tfrac{\ell(I)}{2},\ell(I)]$. Since $\abs{H(\zeta,z)} \lesssim M^{-\frac 14} [z,\zeta]_{\frac 14}$, Lemma \ref{l:avg} provides a universal constant $C$ and $\lambda_z \in C \Psi_{z}^{1,\frac 18;1}$ such that 
\[T_M (f_1,\ldots,f_m) = \int_{Z^d \setminus \mathbb  B_{100M}} \left ( M^{-\frac 14} \ip{\bb}{\lambda_z} + \ip{P_M^\perp \bb }{\phi_z} \right) \prod_{j=1}^m \ip{f_j}{\varu_z} \phi_z \, d\mu(z). \]
Therefore, \eqref{e:pp-ess} follows by the triangle inequality and \eqref{e:pp-bound}.
\end{proof}

\section{Main result and proof}\label{s:all}
Let us now put together the pieces from the previous sections to prove the following compact $T(1)$ wavelet representation theorem.
\begin{theorem}\label{thm:all}
Suppose $T$ is an $m$-linear CZO, $\Lambda$ the associated $(m+1)$-linear form, with $\bb_j \in \BMO$, $j=0,1,\ldots,m$ satisfying \eqref{e:T1}. The following are equivalent.
\begin{itemize}
	{   \item[A.]$T$ is a compact CZO in the reflexive range.
	\item[B.] For any $\varphi^1,\ldots,\varphi^m \in L^\infty$ with compact support, and every $j=0,1,\ldots,m$,
	\[ \lim_{z \to \infty} t^{md-\frac d2}\norm{T^{*,j} (\varphi ^1_z,\ldots,\varphi ^m_z)}_{L^2(\mathbb R^d)}=0, \quad \varphi_z^i = \Sy_z \varphi^i.\]}
	\item[C.] $\Lambda$ is a compact CZ form, i.e. $\mathsf{W}_\Lambda^M(z) \to 0$ as $z \to \infty$ and $\bb_j \in \CMO$.
	\item[D.] There exist compact wavelet forms $\{U_k\}_{k=1}^{K_m}$ and compact paraproduct forms $\{\Pi_{\bb_j}\}_{j=0}^m$ such that for all $f_j \in \Schw(\R^d)$,
	\[ \Lambda(\mathbf f) = \sum_{k=1}^{K_m} U_k(\mathbf f) + \sum_{j=0}^m \Pi^{*,j}_{\bb_j}(\mathbf f), \quad \mathbf f = (f_0,\ldots,f_m).\]
	\item[E.] $T^{*,j}$ is a compact CZO for each $j=0,1,\ldots,m$.
\end{itemize}
\end{theorem}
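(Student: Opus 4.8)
The plan is to prove the cyclic chain of implications $\mathrm{A}\Rightarrow\mathrm{B}\Rightarrow\mathrm{C}\Rightarrow\mathrm{D}\Rightarrow\mathrm{E}\Rightarrow\mathrm{A}$, most of whose analytic content has already been packaged into the preceding propositions. The first two arrows are exactly the two conclusions of Proposition \ref{p:nec}: its first part gives $\mathrm{A}\Rightarrow\mathrm{B}$ (condition B is precisely \eqref{e:local-test} ranging over all adjoint indices $j$), and its second part gives $\mathrm{B}\Rightarrow\mathrm{C}$ (condition C is the assertion that $\Lambda$ is a compact CZ form, i.e.\ $\bb_j\in\CMO$ together with $\mathsf{W}_\Lambda^M(\zeta)\to 0$). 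The arrow $\mathrm{E}\Rightarrow\mathrm{A}$ is immediate: taking $j=0$ in E shows $T=T^{*,0}$ is a compact CZO, which by Definition \ref{d:compact} is strictly stronger than being a compact CZO in the reflexive range. Thus the genuine work is to establish $\mathrm{C}\Rightarrow\mathrm{D}$ and $\mathrm{D}\Rightarrow\mathrm{E}$.

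For $\mathrm{C}\Rightarrow\mathrm{D}$, I would first strip off the paraproduct part of $\Lambda$. A direct computation with the Calder\'on reproducing formula \eqref{e:calderon} shows that $\Pi_{\bb}$ from Definition \ref{d:pp} carries symbol $\bb$ in its cancellative (zeroth) slot and zero symbol in every non-cancellative slot, since those slots are paired with the mean-one bump $\varu_z$ while the remaining slot sees $\langle 1,\phi_z\rangle=0$. Consequently $\Pi^{*,j}_{\bb_j}$ carries symbol $\bb_j$ in slot $j$ and zero elsewhere, so that
\[ \Lambda' := \Lambda - \sum_{j=0}^m \Pi^{*,j}_{\bb_j} \]
is a \emph{cancellative} CZ form. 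Because each $\bb_j\in\CMO$ by C, each $\Pi^{*,j}_{\bb_j}$ is a compact paraproduct form, and by Remark \ref{r:ex} satisfies $\mathsf{W}^M_{\Pi^{*,j}_{\bb_j}}(\zeta)\to 0$. Since $\mathsf{W}^M$ is subadditive in the form (it is a supremum of absolute values of the form over a fixed family of test tuples), $\mathsf{W}^M_{\Lambda'}\le \mathsf{W}^M_\Lambda + \sum_j \mathsf{W}^M_{\Pi^{*,j}_{\bb_j}}\to 0$, so $\Lambda'$ is a compact cancellative CZ form. Proposition \ref{p:rep} then expresses $\Lambda'$ as a finite sum $\sum_k U_k$ of compact wavelet forms, and adding the paraproducts back yields exactly the representation in D.

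For $\mathrm{D}\Rightarrow\mathrm{E}$, fix $j\in\{0,\dots,m\}$; the goal is to write the form $\Lambda^{*,j}$ (whose operator is $T^{*,j}$) as a finite sum of operators already known to be compact CZOs. Applying $(\cdot)^{*,j}$ to the representation in D permutes each summand. A permutation of a compact wavelet form is again a compact wavelet form, since Definition \ref{d:wave} defines wavelet forms up to an arbitrary permutation and the coefficient condition $\ep_z\to 0$ is permutation-independent; hence Proposition \ref{p:wave-compact} makes each $U_k^{*,j}$ the form of a compact CZO. Likewise a permutation of a paraproduct with $\CMO$ symbol is again such a paraproduct, so Proposition \ref{p:ppc} makes each $(\Pi^{*,i}_{\bb_i})^{*,j}$ the form of a compact CZO. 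As a finite sum of compact operators is compact and the CZ structure is preserved under sums, $T^{*,j}$ is a compact CZO; since $j$ was arbitrary, E follows.

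I expect the only genuinely delicate point in this assembly to be the symbol bookkeeping in $\mathrm{C}\Rightarrow\mathrm{D}$: one must confirm that subtracting precisely the $m+1$ adjoint paraproducts $\Pi^{*,j}_{\bb_j}$ annihilates \emph{all} the symbols simultaneously (and introduces no new ones), leaving a form that is at once cancellative and still compact in the $\mathsf{W}^M$ sense, so that Proposition \ref{p:rep} is applicable. Everything else reduces to keeping track of permutations and to the elementary fact that the compact CZOs form a class closed under finite sums; the substantive analysis lives in Propositions \ref{p:nec}, \ref{p:rep}, \ref{p:wave-compact}, and \ref{p:ppc}.
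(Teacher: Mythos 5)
Your proposal is correct and follows essentially the same route as the paper: E $\Rightarrow$ A is trivial, A $\Rightarrow$ B $\Rightarrow$ C is Proposition \ref{p:nec}, C $\Rightarrow$ D is obtained by subtracting $\sum_{j}\Pi^{*,j}_{\bb_j}$ to isolate a cancellative compact CZ form to which Proposition \ref{p:rep} applies, and D $\Rightarrow$ E follows from Propositions \ref{p:wave-compact} and \ref{p:ppc}. The extra details you supply (the subadditivity of $\mathsf{W}^M_\Lambda$ and the permutation-invariance of the wavelet and paraproduct classes) are consistent with, and merely make explicit, what the paper leaves implicit.
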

\begin{proof}
{It is trivial that E. implies A., and the implication from A. to B. and B. to C. is precisely the content of Proposition \ref{p:nec}.}
To show C. implies D. we isolate the cancellative part of $\Lambda$, namely
	\[ \Lambda_{\mathsf{c}}=\Lambda- \Pi_\Lambda, \quad \Pi_\Lambda = \sum_{j=0}^m \Pi^{*,j}_{\bb_j}.\] 
$\Lambda_{\mathsf{c}}$ is definitely a CZ form, and to verify that it is cancellative, simply note that since $\ip{\varu}{1} =1$, by the reproducing formula \eqref{e:calderon}, for $\varphi \in \Schw^0(\R^d)$,
	\[ \Pi_{\bb}(\varphi,1,\ldots,1) = \int_{Z^d} \ip{\bb}{\phi_z} \ip{\phi_z}{\varphi} \, \d\mu(z) = \ip{\bb}{\varphi}, \]
and since $\phi$ is cancellative, $\Pi^{*,j}_{\bb}(\varphi,1,\ldots,1)=0$  for $j =1,\ldots,m$. We want to apply Proposition \ref{p:rep} to $\Lambda_{\mathsf{c}}$ so we must establish that it is a compact CZ form. Since $\bb_j \in \CMO$, by {Remarks \ref{r:cz} and \ref{r:ex},} $\Pi_\Lambda$ is a compact CZ form. Since we also know that $\Lambda$ is a compact CZ form, $\Lambda_{\mathsf{c}}$ must indeed be compact, and B. follows by applying Proposition \ref{p:rep} to $\Lambda_{\mathsf{c}}$. D. implies E. is a consequence of Propositions \ref{p:wave-compact} and \ref{p:ppc}.\end{proof}

A sixth equivalent condition in Theorem \ref{thm:all} would be the compactness of the adjoint operators on \textit{weighted} Lebesgue spaces for weights belonging to the multilinear Muckenhoupt classes \cites{lerner-et-al,nieraeth2019quantitative}. If one is only concerned with weights corresponding to $p_j<\infty$, then this follows immediately from \cite{COY}. Alternatively, to handle the full range of $Q$, including when some $p_j=\infty$, one can verify the conditions of the weighted Riesz-Kolmogorov theorem \cite{COY}*{Proposition 2.9} as we did in Propositions \ref{p:wave-compact} and \ref{p:ppc} by hand.

{Furthermore, let us briefly recall the notion of a compact CZ kernel, which has been ubiquitous in the compact CZO literature until \cite{MS23} recently removed this condition. A compact CZ kernel satisfies the kernel estimates \eqref{e:ker-size} and \eqref{e:ker-smooth}, but in addition to the uniform bound provided by $C_K$, one imposes an additional prefactor on the RHS,
	\[  o\left(\max\left\{\abs{x_0-\mathbf{x}}_2,\abs{x_0-\mathbf{x}}_2 ^{-1}, \abs{x_0+\mathbf{x}}_2 ^{-1}\right\}\right).\] 
By standard arguments \cite{tao-notes}*{Lecture 7}, if one assumes $T$ has a compact CZ kernel and satisfies the \textit{diagonal} weak compactness property
	\begin{equation}\label{e:diag} \lim_{z\to \infty} \Lambda(\mathbf 1_{B(w,t)},\ldots,\mathbf 1_{B(w,t)}) =0,\end{equation}
then condition B. in Theorem \ref{thm:all} holds from whence all the other conclusions follow. This perspective, using the compact kernel and \eqref{e:diag}, was recently undertaken in \cite{CLSY24} in the bilinear setting.}

We finally note that Theorem \ref{thm:all} applies to the linear case as well and a few simplifications can be made due to the greater symmetry enjoyed in this setting. Let us restate Theorem \ref{thm:all} when $m=1$ for additional clarity.

\begin{theorem}\label{thm:linear}
Let $T$ be a linear CZO. The following are equivalent. 
	\begin{itemize}
	{   
	\item[A.] $T$ is compact on $L^2(\mathbb R^d)$.
	\item[B.] For any $\varphi \in L^\infty$ with compact support,
		\[ \lim_{z\to\infty} t^{\frac d2} \norm{T(\varphi_z)}_{L^2} + t^{\frac d2} \norm{T^*(\varphi_z)}_{L^2} = 0,  \quad \varphi_z = \Sy_z \varphi. \]}
	\item[C.] $T(1), T^*(1) \in \CMO$ and 
	\[ \lim_{\zeta \to \infty} \sup_{z \in \mathbb B(\zeta)} t^d \abs{ \ip{T \phi_z}{\phi_\zeta} } =0.\]
	\item[D.] There exists a compact wavelet form $U$ and compact paraproduct forms $\Pi_{T(1)},\Pi_{T^*(1)}$ such that for all $f,g \in \Schw(\R^d)$,
	\[ \ip{Tf}{g} = U(f,g) + \Pi_{T(1)}(f,g) + \Pi_{T^*(1)}(g,f).\]
	\item[E.] $T$ and $T^*$ are compact CZOs.
	\end{itemize}
\end{theorem}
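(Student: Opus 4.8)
The plan is to read off Theorem \ref{thm:linear} from Theorem \ref{thm:all} in the case $m=1$, the only task being to check that each of the five streamlined conditions here is equivalent to the corresponding condition of Theorem \ref{thm:all}; the chain of equivalences then transfers verbatim. Since $T^{*,0}=T$ and $T^{*,1}=T^*$ when $m=1$, condition E here is literally condition E there, and condition B here is condition B there once one notes that there is a single admissible test function $\varphi=\varphi^1$ and that $md-\tfrac d2=\tfrac d2$. The identifications $\bb_0=T(1)$ and $\bb_1=T^*(1)$ in the distributional sense of \eqref{e:T1rig} (recorded in the proof of Proposition \ref{p:nec}) match the $\CMO$ requirements of the two formulations of C.

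For condition A, one inclusion is trivial: $L^{\vec p}\to L^r$ compactness across the reflexive range $1<p<\infty,\ r=p$ contains $L^2\to L^2$ compactness. For the converse I would either invoke linear compact extrapolation \cites{HL1,HL2} to upgrade $L^2$-compactness to every $L^p$, or, more self-containedly, observe that in the proof of Proposition \ref{p:nec} with $m=1$ both the $j=0$ and $j=1$ instances of \eqref{e:id} pair a family converging weakly to zero against the image under $T$ (never $T^*$) of an $L^2$-bounded family; hence only $L^2\to L^2$ compactness of $T$ is used, and A here already suffices to launch A $\Rightarrow$ B.

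The substantive point is the reconciliation of the testing quantity in C. For $m=1$ the vector wavelet $\vec\psi_1$ of \eqref{e:psi-e} is just the mother wavelet $\phi$, the outer parameter ranges over $Z^d$, and $\mathrm{S}_2=\{\mathrm{id},(0\,1)\}$, so
	\[ \mathsf{W}_\Lambda^M(\zeta)=\sup_{z\in\mathbb{B}_M(\zeta)}\max\left\{\,\abs{\ip{\phi_z}{T\phi_\zeta}},\ \abs{\ip{T\phi_z}{\phi_\zeta}}\,\right\} t^d, \quad \zeta=(w,t). \]
Condition C here keeps only the second quantity over $\mathbb{B}=\mathbb{B}_1$. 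To see nothing is lost I would use that $z\in\mathbb{B}_M(\zeta)$ forces the scales of $z$ and $\zeta$ to be comparable and, by the symmetry of the hyperbolic metric, $\zeta\in\mathbb{B}_{2M}(z)$; thus $\abs{\ip{\phi_z}{T\phi_\zeta}}=\abs{\ip{T\phi_\zeta}{\phi_z}}$ is, after interchanging the inner and outer parameters, an instance of the retained quantity over a ball of comparable radius. Since the argument of Remark \ref{r:cz} applies equally to the one-sided quantity, its vanishing as $\zeta\to\infty$ is independent of $M$; dropping the $\max$ over $\mathrm{S}_2$ and enlarging $\mathbb{B}_1$ to $\mathbb{B}_M$ therefore cost nothing, and the two versions of C coincide. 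I expect this to be the main obstacle, as it is precisely where the $z\leftrightarrow\zeta$ symmetry of the hyperbolic metric and the self-adjoint structure of the linear problem replace the separate bookkeeping over permutations in the multilinear case.

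Finally, for D I would specialize Proposition \ref{p:rep}: when $m=1$ the domain $(Z^d)^2$ splits into $\{s\ge t\}$ and $\{t\ge s\}$, producing two compact wavelet forms whose sum is the single form $U$, while $\Pi_\Lambda=\Pi^{*,0}_{\bb_0}+\Pi^{*,1}_{\bb_1}$ unwinds, via $\Pi^{*,1}_{\bb_1}(f_0,f_1)=\Pi_{\bb_1}(f_1,f_0)$ and a relabeling $f\leftrightarrow g$, into $\Pi_{T(1)}(f,g)+\Pi_{T^*(1)}(g,f)$; compactness of every piece is Propositions \ref{p:wave-compact} and \ref{p:ppc}. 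With all five conditions identified with their counterparts in Theorem \ref{thm:all}, the equivalences transfer and the proof is complete.
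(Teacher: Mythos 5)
Your proposal is correct and follows essentially the same route as the paper: specialize Theorem \ref{thm:all} to $m=1$, reconcile condition C via the symmetry $z\in\mathbb B_M(\zeta)\iff\zeta\in\mathbb B_{M'}(z)$ with $M\sim M'$, and supply the only new content at A $\Rightarrow$ B using the linear structure. Your ``self-contained'' option for that step (rerunning Proposition \ref{p:nec} and noting only $L^2$-compactness of $T$ is invoked) is the same computation the paper records more compactly as $\norm{T^*(t^{d/2}\varphi_z)}_{L^2}^2=\ip{TT^*(t^{d/2}\varphi_z)}{t^{d/2}\varphi_z}\to 0$ since $TT^*$ and $T^*T$ are compact and $t^{d/2}\varphi_z$ is weakly null.
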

\begin{proof}
Only A. and C. differ from their multilinear statements in Theorem \ref{thm:all}. To compare C. with its $m$-linear analogue, one only needs to note that $z \in \mathbb B_M(\zeta)$ is equivalent to $\zeta \in \mathbb B_{M'}(z)$ as long as $M \sim M'$. Therefore, the implications from B. to C., from C. to D., and from D. to E. all follow from Theorem \ref{thm:all}. E. trivially implies A., so it remains to establish A. implies B. where the true force of the linear situation is used. {Namely, $t^{\frac d2}\phi_z$ converges weakly to zero and therefore, since $T$ is compact $TT^*(t^{\frac d2}\phi_z)$ and $T^*T (t^{\frac d2}\phi_z)$ both converge strongly to zero, which implies B.}
\end{proof}
{   
Finally, we conclude with a short, simple application.
\subsection*{Example}
Given functions $a,b \in L^\infty(\mathbb R^d)$, consider the operators of multiplication on the Fourier and space sides by
	\[ T_a f (x) = \int_{\mathbb R^d} a(\xi) \hat f(\xi) e^{i x \cdot \xi} \, d\xi, \quad M_b f(x) = b(x) f(x).\]
It is easy to see that unless $a$ or $b$ is identically zero, such operators can never be compact. This even persists for the multilinear analogues, in particular the $m$-linear Fourier multiplier
	\[ T_a(f_1,\ldots,f_m)(x) = \int_{\mathbb R^{md}} a(\xi) \prod_{j=1}^m \hat f_j(\xi_j) e^{i x(\xi_1+\cdots + \xi_m)} \, d \xi, \quad \xi = (\xi_1,\ldots,\xi_m), \xi_j \in \mathbb R^d.\] 
Nonetheless, by the uncertainty principle, we expect compositions of $T_a$ and $M_b$ to be compact. The most well-known example is in the linear case when $a,b \in L^2(\mathbb R^d)$. Then trivially $T_aM_b$ is Hilbert-Schmidt under no smoothness assumptions on $a$ and $b$.

We must impose some conditions on $a$ and functions $b_0,\ldots,b_m$ so that suitable compositions of $T_a$ and $M_{b_j}$ are CZOs. Assume there exists $\delta>0$ such that $b_j \in \C^\delta(\mathbb R^d)$ and $a$ belongs to $\C^\infty(\mathbb R^{md} \setminus\{0\})$ such that
	\begin{equation}\label{e:hormander} \abs{\partial^\alpha a(\xi)}\leq C_\alpha \vert\xi\vert^{-|\alpha| }, \quad \abs{b_j(x)} + \frac{\abs{b_j(x) - b_j(x+h)}}{\abs{h}^\delta} \lesssim (1+\abs{x})^{-\delta} .\end{equation}
It is a routine exercise to check that under these conditions, the operator
	\[ S(f_1,\ldots,f_m) = M_{b_0} T_a(M_{b_1}f_1,\ldots,M_{b_m}f_m) \]
is an $m$-linear CZO satisfying $\mathfrak b_j \in \CMO$. Therefore, by Theorem \ref{thm:all}, compactness of $S$ is characterized by $\mathsf{W}_\Lambda(z) \to 0$. To check that, we examine quantities of the form
	\begin{equation}\label{e:WL-ver} \int_{\mathbb R^{md}} a(\xi_1,\ldots,\xi_m) \widehat{c_0g^0_{z_0}}(\xi_1+\cdots+\xi_m) \prod_{j=1}^m \widehat{c_j g^j_{z_j}}(\xi_j) \, d\xi, \quad z_j \in \mathbb B(z),\end{equation}
where $(c_0,\ldots,c_m)$ is some permutation of $(b_0,\ldots,b_m)$ and each $g^i$ is either $\phi$, $\psi^1$, or $\psi^3$. One can rewrite, for any $\eta \in \mathbb R^d$,
	\[ \widehat{c_j g^j_{z_j}}(\eta) = \ip{ e^{i \eta \cdot} c_j}{g^j_{z_j}}\]
to see that for each $\xi$, the integrand in \eqref{e:WL-ver} goes to zero as $z \to \infty$. Then, if $a \in L^1(\mathbb R^{md})$, by dominated convergence, the entire quantity \eqref{e:WL-ver} goes to zero, and $S$ is a compact CZO.}

\bibliography{BilCptT1}
\bibliographystyle{amsplain}
	
\end{document}